\documentclass[11pt]{amsart}

\usepackage{amssymb}
\usepackage{mathtools}
\usepackage[numbers,sort&compress]{natbib}
\usepackage{xcolor}
\usepackage[margin=1.4in]{geometry}
\usepackage{hyperref}

\hypersetup{
  pdfauthor={Bonan Chen},
  pdftitle={Endpoint and Vanishing-Density Asymptotics for Hardy--Szego Zero Counts},
  pdfsubject={Endpoint and vanishing-density asymptotics for Hardy--Szego zero counts},
  pdfkeywords={Hardy--Szego zeros, determinantal point processes, fixed-count asymptotics, hole probabilities, vanishing-density asymptotics}
}

\title[Endpoint and Vanishing-Density Asymptotics]
{Endpoint and Vanishing-Density Asymptotics for Hardy--Szeg\H{o} Zero Counts}

\author{Bonan Chen}
\address{School of Mathematical Sciences, Soochow University,
Suzhou 215006, P. R. China}
\email{bnchen@suda.edu.cn}

\subjclass[2020]{60G55, 60F10, 47B35}
\keywords{Hardy--Szeg\H{o} zeros; determinantal point processes;
fixed-count asymptotics; hole probabilities; vanishing-density asymptotics}

\numberwithin{equation}{section}

\theoremstyle{plain}
\newtheorem{theorem}{Theorem}[section]
\newtheorem{lemma}[theorem]{Lemma}
\newtheorem{proposition}[theorem]{Proposition}

\theoremstyle{definition}

\theoremstyle{remark}

\begin{document}

\begin{abstract}
We study the number \(N_I(L)\) of zeros of the Hardy--Szeg\H{o} zero process in the horizontal window \([0,L]\times I\) as \(L\to\infty\), where \(I=[\alpha,\beta]\Subset(0,\infty)\) is fixed.  
We obtain sharp point-probability asymptotics at the lower endpoint of the density scale.  
In the fixed-count regime, for every fixed integer \(k\geq0\), we determine a full asymptotic formula for \(\mathbb P\{N_I(L)=k\}\), identifying its exponential rate, order-one correction, and \(k\)-dependent polynomial prefactor; the case \(k=0\) gives the hole probability.
In the vanishing-density regime, we prove a uniform local asymptotic formula for \(b_L\leq n\leq\varepsilon_L L\), where \(b_L\to\infty\), \(\varepsilon_L\downarrow0\), and \(b_L\leq\varepsilon_L L\), identifying the large-deviation exponent, endpoint correction, and Gaussian prefactor.
\end{abstract}

\maketitle

\section{Introduction and main results}\label{sec:introduction}

For point processes in growing observation windows, a natural objective is to understand the distribution of the number of points across its different asymptotic scales.  
In the determinantal and Gaussian-analytic settings most relevant here, different aspects of this problem have been studied in several regimes.  
At the central scale, Gaussian fluctuations of determinantal counting statistics are classical; see, for example, \cite{CostinLebowitz1995,Soshnikov2002}, while local Gaussian asymptotics describe the corresponding individual lattice probabilities near the typical count \cite{ForresterLebowitz2014}.  
Beyond the central scale, precise deviations and extreme low-count events, including hole probabilities, have also been studied in Gaussian-analytic and determinantal models; see, for example, \cite{BuckleyNishryPeledSodin2018,FenzlLambert2022,Nishry2010,SodinTsirelson2005}.

\paragraph*{The Hardy--Szeg\H{o} window count.} We study this multi-scale counting problem for the Hardy--Szeg\H{o} zero process in long bounded-height windows of the upper half-plane.  
The process is the upper-half-plane realization, under a Cayley transformation, of the zero set of the parameter-one hyperbolic Gaussian analytic function.  
In the unit disk, Peres and Vir\'ag proved that this zero set is a conformally invariant determinantal point process with the Bergman correlation kernel; see \cite[Theorem~1 and Proposition~9]{PeresVirag2005}, and see \cite[Chapters~2--5]{HoughKrishnapurPeresVirag2009} for general background.
Thus the model combines the geometric and conformal structure of Gaussian-analytic zeros with the repulsive and operator-theoretic structure of a determinantal point process.
In particular, the determinantal representation allows counting generating functions to be expressed as Fredholm determinants, making the process especially well suited to precise counting questions.

In this upper-half-plane realization, the Hardy--Szeg\H{o} zero process is a determinantal point process \(Z_{\mathbb H}\) on \( \mathbb H=\{z\in\mathbb C:\operatorname{Im}z>0\} \) with correlation kernel
\[
K_{\mathbb H}(z,w)
=
-\frac{1}{\pi(z-\overline w)^2}
\]
with respect to planar Lebesgue measure.
It is stationary under horizontal translations.
For an interval \(I=[\alpha,\beta]\) with \(0<\alpha<\beta<\infty\), we consider the count
\[
N_I(L)
=
\#\bigl(Z_{\mathbb H}\cap([0,L]\times I)\bigr).
\]
Its one-point intensity gives
\[
\mathbb E N_I(L)
=
\frac{L}{4\pi}
\left (\frac{1}{\alpha}-\frac{1}{\beta}\right ),
\]
so \(N_I(L)/L\) is the natural density variable for the multi-scale counting problem posed above.

\paragraph*{The zero-density endpoint.}
The work \cite{AiFangHou2026} establishes the large-deviation principle for \(N_I(L)/L\) and the strong Szeg\H{o} asymptotics that provide the analytic framework used here.
The present paper extends this analysis to the zero-density endpoint and derives precise point-probability asymptotics in two regimes.
The first consists of fixed counts,
\[
\mathbb P\{N_I(L)=k\},
\qquad k\geq0\ \text{fixed},
\]
including the hole event \(k=0\).
The second consists of integer sequences \(n=n_L\) satisfying
\[
n_L\longrightarrow\infty,
\qquad
\frac{n_L}{L}\longrightarrow0.
\]
The resulting asymptotics describe both the exact counting probabilities at the endpoint and their uniform behavior along diverging counts whose density tends to zero.

\paragraph*{The endpoint mechanism.}
The probability generating function \(\mathbb E z^{N_I(L)}\) admits a one-dimensional Fredholm-determinant representation whose large-\(L\) analysis is based on the Wiener--Hopf comparison developed in \cite{AiFangHou2026}.
Fixed counts are obtained from coefficients at \(z=0\), while the vanishing-density saddle approaches the same point, so both regimes require precise control near \(z=0\), equivalently near \(t=-1\) with \(t=z-1\).

\subsection{Main results}\label{subsec:main-results}

We first introduce the notation for the endpoint expansion.
For the interval \(I=[\alpha,\beta]\) fixed above, set
\[
q_I(\xi)
=
e^{-2\alpha\xi}-e^{-2\beta\xi},
\qquad \xi>0,
\]
and define
\[
\rho_I
:=
\sup_{\xi>0}q_I(\xi),
\qquad
\Omega_I
:=
\mathbb C\setminus(-\infty,-\rho_I^{-1}].
\]
Since \(0<\rho_I<1\), the domain \(\Omega_I\) contains \([-1,\infty)\).
With the logarithm normalized at \(t=0\), let
\[
\mathcal L_I(t)
=
\frac{1}{2\pi}
\int_0^\infty
\log\bigl(1+tq_I(\xi)\bigr)\,\mathrm d\xi,
\qquad t\in\Omega_I.
\]

The following theorem extends the strong Szeg\H{o} expansion of \cite[Corollary~1.5]{AiFangHou2026} to \(\Omega_I\), which contains the endpoint \(t=-1\), and provides the common analytic input for the two probability results below.

\begin{theorem}
\label{thm:endpoint-expansion}
For each \(L>0\), the function \(t\mapsto\mathbb E(1+t)^{N_I(L)}\) is zero-free on \(\Omega_I\).
There exists a unique holomorphic function \(D_I\) on \(\Omega_I\), with \(D_I(0)=0\), such that
\[
\log\mathbb E(1+t)^{N_I(L)}
=
L\mathcal L_I(t)+D_I(t)+o(1),
\qquad L\to\infty,
\]
locally uniformly on \(\Omega_I\), where the logarithm denotes the holomorphic branch vanishing at \(t=0\).
Moreover, \(D_I\) is real-valued on \((-\rho_I^{-1},\infty)\).
\end{theorem}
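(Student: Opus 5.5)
We reduce everything to a Fredholm determinant of a self-adjoint trace-class operator on \(L^2(0,\infty)\), obtained by a partial Fourier transform in the horizontal variable. The Bergman kernel \(-1/(\pi(z-\overline w)^2)\) has the Laplace representation \(\frac1\pi\int_0^\infty \xi e^{i\xi(z-\overline w)}\,\mathrm d\xi\). Restricting to the strip \(\mathbb R\times I\), the associated integral operator is unitarily equivalent to multiplication by \(q_I(\xi)\) on \(L^2((0,\infty),\mathrm d\xi)\). The \(\xi\)-integral of \(\xi e^{-2y\xi}\) over \(y\in I\) produces exactly \(\tfrac12 q_I(\xi)\) after normalization. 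Truncating horizontally to \([0,L]\) then gives
\[
\mathbb E(1+t)^{N_I(L)}=\det\bigl(1+tK_L\bigr),\qquad K_L=P_{[0,L]}\,\widehat{q_I}\,P_{[0,L]},
\]
a truncated Wiener--Hopf (convolution) operator on \(L^2[0,L]\) with symbol \(q_I\). This is the representation used in \cite{AiFangHou2026}. Since \(0\le q_I\le\rho_I\), we have \(0\le K_L\le\rho_I\) as an operator. Hence the spectrum of \(K_L\) lies in \([0,\rho_I]\), and \(1+t\lambda\neq0\) for all eigenvalues \(\lambda\) whenever \(t\in\Omega_I\). This gives zero-freeness on \(\Omega_I\), and \(\log\det(1+tK_L)=\operatorname{tr}\log(1+tK_L)\) defines the holomorphic branch vanishing at \(t=0\).

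Next we prove the strong Szegő asymptotics on \(\Omega_I\) by adapting the Wiener--Hopf comparison of \cite{AiFangHou2026}. For fixed \(t\in\Omega_I\), the symbol \(1+tq_I(\xi)\) (extended evenly or to \(\mathbb R\) as appropriate) is nonvanishing and tends to \(1\) at infinity. Its logarithm \(\log(1+tq_I)\) is well defined along the whole real line with zero winding number. The reason is that \(tq_I(\xi)\) ranges over the segment \(t[0,\rho_I]\), which avoids \((-\infty,-1]\) precisely when \(t\in\Omega_I\). The Akhiezer--Kac (continuous Szegő--Kac) theorem then gives
\[
\log\det(1+tK_L)=\frac{L}{2\pi}\int\log(1+tq_I)+\int_0^\infty x\,|\widehat{\log(1+tq_I)}(x)|^2\text{-type term}+o(1),
\]
with \(D_I(t)\) the standard Wiener--Hopf constant \(\int_0^\infty x\,\widehat{a}(x)\widehat{a}(-x)\,\mathrm dx\), where \(a=\log(1+tq_I)\). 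We define \(D_I\) by this formula.

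Holomorphy of \(D_I\) in \(t\) follows from the holomorphy of \(a\) in \(t\), together with bounds on \(\widehat a\) locally uniform in \(t\). The normalization \(D_I(0)=0\) holds since \(a\equiv0\) at \(t=0\). For real \(t>-\rho_I^{-1}\), \(a\) is real and (after symmetrization) even, so \(\widehat a(-x)=\overline{\widehat a(x)}\) and \(D_I(t)\) is real. Uniqueness is immediate: the difference of two candidates is \(o(1)\) and independent of \(L\).

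The main obstacle is local uniformity of the \(o(1)\) error on \(\Omega_I\), and in particular near \(t=-1\) and along the cut approach. There the factor \(1+tq_I\) gets small, and the Sobolev \(H^{1/2}\)-type norms of \(a\) must be controlled uniformly on compacts. Near \(\xi=0\) the symbol \(q_I(\xi)\sim2(\beta-\alpha)\xi\) is not smooth as an even function (it has a \(|\xi|\) kink), so the classical smooth-symbol theorem does not apply verbatim. Our plan is to rerun the comparison argument of \cite{AiFangHou2026}: factor \(1+tK\) as a product of Wiener--Hopf factors \(W(a_\pm)\) and bound the trace-norm error of \(W(e^{a})-W(e^{a_-})W(e^{a_+})\) restricted to \([0,L]\). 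All of these estimates depend on \(t\) only through \(\inf_\xi|1+tq_I(\xi)|\) and through moduli of \(a\) that are continuous in \(t\), and the infimum is bounded below on compact subsets of \(\Omega_I\). A Vitali/normal-families argument then upgrades pointwise convergence to locally uniform convergence of holomorphic functions. For this step we use the uniform bound \(|\log\det(1+tK_L)-L\mathcal L_I(t)|\le C_K\) on compacts \(K\), obtained from the same trace-class estimates.
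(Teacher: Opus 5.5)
Your first step is where the proof breaks. The identity $\mathbb E(1+t)^{N_I(L)}=\det(\operatorname{Id}+tK_L)$, with $K_L=W_L(q_I)$ the truncated Wiener--Hopf operator with symbol $q_I$, is false. The paper, following \cite{AiFangHou2026}, works with $T_{I,L}$ and only \emph{compares} it with the Wiener--Hopf model.

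Your partial Fourier analysis is correct on the full strip $\mathbb R\times I$. There, at frequency $\xi>0$, the operator is rank one in $y$, with eigenvalue $q_I(\xi)$ and unit eigenvector $e_\xi\propto e^{-\xi y}$ in $L^2(I)$. But $e_\xi$ depends on $\xi$, and the horizontal cutoff to $[0,L]$ mixes frequencies, so this profile cannot be factored out.

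The correct reduced operator $T_{I,L}$ has the kernel of $\widetilde W_{I,L}$ multiplied pointwise by the Gram factor $G(\xi,\eta)=\langle e_\eta,e_\xi\rangle_{L^2(I)}$. Here $\widetilde W_{I,L}=V_{I,L}V_{I,L}^*$ has the same nonzero spectrum as $W_L(q_I)=V_{I,L}^*V_{I,L}$. The factor $G$ equals $1$ on the diagonal and lies in $(0,1)$ off it. So the traces agree, which is why your leading term $L\mathcal L_I(t)$ is right. However,
$\operatorname{tr}W_L(q_I)^2-\operatorname{tr}T_{I,L}^2=\iint|\widetilde W_{I,L}(\xi,\eta)|^2(1-G^2)\,\mathrm d\xi\,\mathrm d\eta$
converges to a strictly positive constant, by Riemann--Lebesgue and $1-G^2=O((\xi-\eta)^2)$. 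Hence the two log-determinants have different constant terms already at order $t^2$. The Akhiezer--Kac constant $\int_0^\infty x\,\widehat a(x)\widehat a(-x)\,\mathrm dx$ is therefore not $D_I$, and your holomorphy and reality arguments concern the wrong function. The true constant also contains the limit of $\log\det\bigl(\operatorname{Id}+t(\operatorname{Id}+t\widetilde W_{I,L})^{-1}(T_{I,L}-\widetilde W_{I,L})\bigr)$. Nothing in your plan shows that this converges, let alone on all of $\Omega_I$ up to $t=-1$.

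Part of your plan survives. The spectral bound holds for the correct operator by compression. The count operator is $P_{[0,L]\times I}(P_{\mathbb R\times I}K_{\mathbb H}P_{\mathbb R\times I})P_{[0,L]\times I}$, and the middle factor has norm $\rho_I$; this is Lemma~\ref{lem:strict-spectral-bound} in another form. Your uniform bound for the pure Wiener--Hopf determinant on compact $K\Subset\Omega_I$ is also what the paper proves, via the $\lambda$-interpolation of \cite[Lemma~5.8]{AiFangHou2026}.

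Two ingredients are missing:
\begin{enumerate}
\item The $L$-uniform comparison $\limsup_L\|T_{I,L}-\widetilde W_{I,L}\|_{\mathfrak S_1}<\infty$ of \cite[Proposition~5.5]{AiFangHou2026}. Combined with $\|(\operatorname{Id}+t\widetilde W_{I,L})^{-1}\|\le(\inf_{0\le u\le\rho_I}|1+tu|)^{-1}$ and $|\det(\operatorname{Id}+A)|\le e^{\|A\|_{\mathfrak S_1}}$, it transfers only \emph{boundedness}, not convergence, to $e^{-L\mathcal L_I(t)}\det(\operatorname{Id}+tT_{I,L})$.
\item A source of \emph{convergence} for the true normalized determinants on some open set. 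The paper imports this from the strong Szeg\H{o} expansion \cite[Corollary~1.5]{AiFangHou2026} on $\mathbb C\setminus(-\infty,-1]$.
\end{enumerate}
Vitali's theorem and the identity theorem then propagate convergence to $\Omega_I$, and Hurwitz's theorem keeps the limit zero-free. $D_I$ is then defined as the normalized logarithm of this limit, not by an explicit Wiener--Hopf formula. Reality follows from positivity of the normalized determinants for real $t>-\rho_I^{-1}$.
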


\paragraph*{Fixed-count asymptotics.} Define
\begin{equation}
\label{eq:endpoint-rate}
J_I(0)
:=
-\mathcal L_I(-1)
=
-\frac{1}{2\pi}
\int_0^\infty
\log\bigl(1-q_I(\xi)\bigr)\,\mathrm d\xi,
\end{equation}
and
\[
\kappa_I
:=
\frac{1}{2\pi}
\int_0^\infty
\frac{q_I(\xi)}{1-q_I(\xi)}\,\mathrm d\xi
>0.
\]

\begin{theorem}
\label{thm:fixed-counts}
For every fixed integer \(k\geq0\),
\[
\mathbb P\{N_I(L)=k\}
=
\exp\{-LJ_I(0)+D_I(-1)\}
\frac{(L\kappa_I)^k}{k!}
(1+o(1)),
\qquad L\to\infty.
\]
\end{theorem}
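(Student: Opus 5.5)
Write \(F_L(z):=\mathbb E z^{N_I(L)}=\sum_{k\ge0}\mathbb P\{N_I(L)=k\}z^k\), so that \(\mathbb P\{N_I(L)=k\}=[z^k]F_L(z)\). By Theorem~\ref{thm:endpoint-expansion}, with \(t=z-1\), we have \(F_L(z)=\exp\{L\mathcal L_I(z-1)+D_I(z-1)+o(1)\}\) locally uniformly for \(z-1\in\Omega_I\). This region contains a full disk \(|z|\le r\) for some \(r\) with \(1-r>\rho_I^{-1}\)? Not quite: \(z-1\in\Omega_I\) excludes only \(z\le 1-\rho_I^{-1}<0\). Since \(\rho_I<1\), the point \(1-\rho_I^{-1}\) is strictly negative. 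Hence the disk \(|z|\le r_0\) with \(r_0<\rho_I^{-1}-1\) lies in the domain, and the expansion holds uniformly on it.

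The plan is to extract the coefficient by Cauchy's formula on a circle of radius \(r=r_L\) shrinking like \(c/L\):
\[
\mathbb P\{N_I(L)=k\}=\frac{1}{2\pi i}\oint_{|z|=r}\frac{F_L(z)}{z^{k+1}}\,\mathrm dz .
\]
Near \(z=0\), expand \(\mathcal L_I(z-1)=\mathcal L_I(-1)+\mathcal L_I'(-1)z+O(z^2)\). Differentiating under the integral gives
\[
\mathcal L_I'(-1)=\frac1{2\pi}\int_0^\infty \frac{q_I}{1-q_I}\,\mathrm d\xi=\kappa_I ,
\]
and \(D_I(z-1)=D_I(-1)+O(z)\). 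On \(|z|=c/L\) we therefore get
\[
F_L(z)=e^{-LJ_I(0)+D_I(-1)}\,e^{L\kappa_I z}\,(1+o(1)),
\]
because \(Lz^2=O(1/L)\), \(z=O(1/L)\), and the \(o(1)\) is uniform on a fixed neighbourhood of \(z=0\). Thus
\[
\mathbb P\{N_I(L)=k\}=e^{-LJ_I(0)+D_I(-1)}\left(\frac{1}{2\pi i}\oint_{|z|=r}\frac{e^{L\kappa_I z}}{z^{k+1}}\,\mathrm dz+E\right),
\]
where the main term equals \((L\kappa_I)^k/k!\) exactly.

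The main obstacle is the error term \(E\). A multiplicative \(o(1)\) integrated over the circle only gives \(|E|\le o(1)\,\max|e^{L\kappa_I z}|\,r^{-k}=o(1)e^{c\kappa_I}(L/c)^k\). Choosing \(c=k/\kappa_I\) for \(k\ge1\), or \(c\) fixed for \(k=0\), makes this \(o(L^k)\), which is exactly \(o\bigl((L\kappa_I)^k/k!\bigr)\) since \(k\) is fixed. So the error is fine, provided the remainder in Theorem~\ref{thm:endpoint-expansion} is genuinely uniform on a fixed compact neighbourhood of \(t=-1\) in \(\Omega_I\), which is what "locally uniformly" gives.

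I also need to justify that the \(O(z^2)\) Taylor remainder, multiplied by \(L\), is uniformly small on the circle; this uses holomorphy of \(\mathcal L_I\) near \(-1\). Finally, \(\kappa_I>0\) is needed so that the leading coefficient is nonzero; this holds because \(q_I>0\) on \((0,\infty)\).
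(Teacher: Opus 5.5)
Your proof is correct and is essentially the paper's argument. Like the paper, you combine the endpoint expansion of Theorem~\ref{thm:endpoint-expansion} near \(z=0\) with Cauchy's coefficient formula on circles of radius of order \(1/L\). The only difference is bookkeeping: the paper first divides by the hole probability and shows \(R_{I,L}(w/L)\to e^{\kappa_I w}\) locally uniformly in \(w\), whereas you carry the factor \(e^{-LJ_I(0)+D_I(-1)}\) through the contour integral directly (and any fixed \(c>0\) works there, so the choice \(c=k/\kappa_I\) is unnecessary).
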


The case \(k=0\) gives the precise hole-probability asymptotic.
All fixed counts share the exponential rate \(J_I(0)\) and the correction \(D_I(-1)\), with their dependence on \(k\) captured by the prefactor \((L\kappa_I)^k/k!\).

\paragraph*{Vanishing-density asymptotics.} To describe diverging counts of vanishing density, write \(s=e^\theta=1+t\), so that the endpoint \(t=-1\) corresponds to \(s=0\), and define
\begin{equation}\label{eq:endpoint-A}
A_I(s)
:=
\mathcal L_I(s-1)-\mathcal L_I(-1)
=
\frac{1}{2\pi}
\int_0^\infty
\log\left(
1+s\frac{q_I(\xi)}{1-q_I(\xi)}
\right)\,\mathrm d\xi,
\qquad s\geq0.
\end{equation}
Then
\[
A_I(0)=0,
\qquad
A_I'(0)=\kappa_I.
\]
By \cite[Theorem~1.2 and Corollary~1.5]{AiFangHou2026}, the good large-deviation rate function for \(N_I(L)/L\) can equivalently be written as
\begin{equation}
\label{eq:rate-variational}
J_I(x)
=
J_I(0)
+
\sup_{s>0}
\bigl\{x\log s-A_I(s)\bigr\},
\qquad x\geq0.
\end{equation}
For every \(x>0\), the supremum in \eqref{eq:rate-variational} is attained at a unique point \(s>0\), characterized by
\[
sA_I'(s)=x;
\]
see \cite[Corollary~4.3]{AiFangHou2026}.
For the target density \(x=n/L\), denote this saddle point by \(s_{L,n}\).
Thus
\begin{equation}
\label{eq:limiting-saddle-equation}
s_{L,n}A_I'(s_{L,n})
=
\frac{n}{L}.
\end{equation}

\begin{theorem}
\label{thm:vanishing-density}
Let \(b_L,\varepsilon_L>0\) satisfy \(b_L\to\infty\), \(\varepsilon_L\to0\), and \(b_L\leq\varepsilon_L L\) for all sufficiently large \(L\).
Then, as \(L\to\infty\), uniformly over integers \(b_L\leq n\leq\varepsilon_L L\),
\[
\mathbb P\{N_I(L)=n\}
=
\frac{
\exp\{-LJ_I(n/L)+D_I(s_{L,n}-1)\}
}{
\sqrt{
2\pi L\bigl(
s_{L,n}A_I'(s_{L,n})
+s_{L,n}^2A_I''(s_{L,n})
\bigr)
}
}
(1+o(1)).
\]
Moreover, uniformly over the same integers,
\[
s_{L,n}\sim\frac{n}{L\kappa_I},
\qquad
L\bigl(
s_{L,n}A_I'(s_{L,n})
+s_{L,n}^2A_I''(s_{L,n})
\bigr)
\sim n.
\]
\end{theorem}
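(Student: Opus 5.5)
The plan is a saddle-point analysis of the Cauchy coefficient integral. Set \(P_L(s):=\mathbb E s^{N_I(L)}\), a polynomial-type entire generating function with nonnegative coefficients. By Cauchy's formula on the circle \(|s|=r\),
\[
\mathbb P\{N_I(L)=n\}=\frac{1}{2\pi}\int_{-\pi}^{\pi}P_L(re^{i\phi})\,r^{-n}e^{-in\phi}\,\mathrm d\phi .
\]
We choose \(r=s_{L,n}\). With \(t=s-1\), Theorem~\ref{thm:endpoint-expansion} gives
\[
P_L(s)=\exp\{L\mathcal L_I(-1)+LA_I(s)+D_I(s-1)+o(1)\}.
\]
The natural leading term is therefore
\[
\exp\{-LJ_I(0)+LA_I(s)-n\log s+D_I(s-1)\},
\]
and at \(s=s_{L,n}\) this equals \(\exp\{-LJ_I(n/L)+D_I(s_{L,n}-1)\}\) by \eqref{eq:rate-variational}.

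\textbf{Saddle asymptotics.} First we establish the asymptotics of \(s_{L,n}\). Since \(A_I\) is analytic at \(0\) with \(A_I'(0)=\kappa_I>0\), we have
\[
sA_I'(s)=\kappa_I s+O(s^2), \qquad sA_I'(s)+s^2A_I''(s)=\kappa_I s+O(s^2).
\]
As \(n/L\le\varepsilon_L\to0\), inverting the saddle equation gives \(s_{L,n}=\frac{n}{L\kappa_I}(1+O(\varepsilon_L))\) uniformly. The variance \(\sigma^2:=L(s A_I'+s^2A_I'')\) is then \(\sim L\kappa_I s_{L,n}\sim n\to\infty\). This proves the second assertion.

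\textbf{Local Gaussian window.} In the region \(|\phi|\le\delta_L\) we expand \(\phi\mapsto LA_I(s_{L,n}e^{i\phi})-in\phi\). The linear term cancels by the saddle equation, and the quadratic term is \(-\tfrac12\sigma^2\phi^2\). The cubic remainder is \(O(Ls|\phi|^3)=O(n|\phi|^3)\), since all \(\phi\)-derivatives of \(A_I(se^{i\phi})\) are \(O(s)\) for small \(s\). Choosing \(\delta_L=n^{-5/12}\) makes the cubic error \(o(1)\) while \(\sigma^2\delta_L^2\to\infty\). Because \(s_{L,n}\to0\), all points \(s_{L,n}e^{i\phi}\) lie in a fixed compact neighbourhood of \(s=0\), that is, of \(t=-1\in\Omega_I\). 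Hence the \(o(1)\) error of Theorem~\ref{thm:endpoint-expansion} is uniform there, and continuity of \(D_I\) gives \(D_I(s e^{i\phi}-1)=D_I(s_{L,n}-1)+o(1)\). The Gaussian integral produces the factor \((2\pi\sigma^2)^{-1/2}\).

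\textbf{Tail region (main obstacle).} For \(\delta_L\le|\phi|\le\pi\) we again use uniform convergence on the same fixed compact set (\(|s|\le\varepsilon_L/\kappa_I\cdot 2\) eventually). This reduces the problem to bounding
\[
\operatorname{Re}\bigl(A_I(se^{i\phi})-A_I(s)\bigr)\le -c\,s(1-\cos\phi),
\]
which follows from \(A_I(w)=\kappa_I w+O(|w|^2)\) with \(s\) small. This yields a contribution
\[
\lesssim e^{-c n\delta_L^2}=e^{-cn^{1/6}},
\]
which is \(o(n^{-1/2})\) relative to the main term. The delicacy is that the \(o(1)\) from Theorem~\ref{thm:endpoint-expansion} enters multiplicatively as \(e^{o(1)}\), so it is harmless. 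What must be checked is that the error is genuinely uniform on a disk around \(t=-1\) rather than only on compacts of \(\Omega_I\) away from the boundary; since \(-1\) is interior to \(\Omega_I\) (as \(\rho_I<1\)), a fixed disk of radius \(<\rho_I^{-1}-1\) suffices.

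Combining the window and tail estimates gives the formula uniformly in \(b_L\le n\le\varepsilon_L L\). The uniformity uses only \(n\ge b_L\to\infty\) and \(n/L\le\varepsilon_L\to0\).
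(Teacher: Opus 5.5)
Your proof is correct, but it takes a different route from the paper. You run a direct saddle-point analysis of the Cauchy coefficient integral on the circle $|s|=s_{L,n}$. This works because Theorem~\ref{thm:endpoint-expansion} gives a multiplicative error $e^{o(1)}$ that is uniform on a fixed closed disk around $t=-1$, and that disk contains the whole contour. In effect you scale up the paper's own argument for fixed $k$, which uses circles $|z|=R/L$, to radii $\asymp n/L$.

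The paper proceeds differently:
\begin{itemize}
\item it tilts by $r^{N_I(L)}$ and solves for the exact finite-$L$ saddle $r_{L,n}$, defined by $M_{I,L}(r_{L,n})=n$;
\item it uses the Bernoulli/determinantal structure of the tilted law (Lemma~3.1) to apply the Forrester--Lebowitz local CLT at the tilted mean;
\item it then needs Lemma~\ref{lem:saddle-point-replacement} to pass from $r_{L,n}$ to $s_{L,n}$, by bounding $|r_{L,n}-s_{L,n}|$ and controlling $\Phi_{L,n}''$.
\end{itemize}
Your route is more self-contained: it avoids the exact saddle, the external local CLT and the replacement lemma, at the cost of proving the off-window decay on the circle yourself. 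The paper's route uses the complex expansion only along real $r$ (plus Cauchy estimates) and delegates all oscillatory and tail control to the local CLT. As a by-product it yields the exact-saddle formula of Proposition~\ref{prop:exact-saddle-point-probability}.

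One step needs tightening. The tail bound $\operatorname{Re}\bigl(A_I(se^{i\phi})-A_I(s)\bigr)\le -c\,s(1-\cos\phi)$ does not follow from $A_I(w)=\kappa_I w+O(|w|^2)$ alone. A general holomorphic remainder contributes $O(s^2|\phi|)$ to the real part, and this can have either sign. It is not dominated by $\kappa_I s(1-\cos\phi)\asymp s\phi^2$ when $|\phi|\lesssim s$. With $|\phi|\geq n^{-5/12}$ and $s\asymp n/L$, that range is nonempty once $n\gtrsim L^{12/17}$, which can lie inside $n\leq\varepsilon_L L$.

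You need the real structure of $A_I$. Set $g=q_I/(1-q_I)$; this is bounded because $q_I\le\rho_I<1$, and it is integrable because $q_I$ is. The identity $|1+se^{i\phi}g|^2=(1+sg)^2-2sg(1-\cos\phi)$ together with $\log(1-x)\leq -x$ gives
\[
\operatorname{Re}\bigl(A_I(se^{i\phi})-A_I(s)\bigr)\leq-\frac{s(1-\cos\phi)}{2\pi}\int_0^\infty\frac{g(\xi)}{(1+sg(\xi))^2}\,\mathrm d\xi\leq-\frac{\kappa_I}{2}\,s(1-\cos\phi)
\]
for small $s$. Alternatively, use your cubic $\phi$-expansion for $|\phi|\leq\phi_0$ and the crude bound for $|\phi|\geq\phi_0$. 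With this repair, the window and tail estimates go through as you describe.
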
 

\subsection{Proof strategy and organization}
\label{subsec:proof-strategy}
\label{subsec:introduction-organization}

The proof of Theorem~\ref{thm:endpoint-expansion} starts from the one-dimensional Fredholm representation \eqref{eq:general-interval-fredholm}, which expresses the probability generating function in terms of the reduced positive trace-class operator \(T_{I,L}\) on \(L^2(0,\infty)\).
The strict spectral bound \eqref{eq:strict-spectral-bound} then gives \(\sigma(T_{I,L})\subset[0,\rho_I]\), uniformly in \(L\), and hence
\[
\det\bigl(\operatorname{Id}+tT_{I,L}\bigr)\neq0,
\qquad
t\in\Omega_I,
\]
with \(-1\in\Omega_I\).
The strong Szeg\H{o} expansion in \cite[Corollary~1.5]{AiFangHou2026} gives convergence on the interior domain.
By extending the Wiener--Hopf estimates and comparison argument developed there, we establish local boundedness on \(\Omega_I\) of the normalized determinants
\[
e^{-L\mathcal L_I(t)}
\det\bigl(\operatorname{Id}+tT_{I,L}\bigr).
\]
A normal-family argument, based on Vitali's theorem, extends their interior convergence to \(\Omega_I\), while Hurwitz's theorem shows that the limiting normalized determinant remains zero-free.
Its normalized holomorphic logarithm defines the endpoint correction \(D_I\).
This gives Proposition~\ref{prop:endpoint-determinant-general} and, together with \eqref{eq:general-interval-fredholm}, proves Theorem~\ref{thm:endpoint-expansion}.

Theorem~\ref{thm:fixed-counts} follows from two uses of this endpoint expansion.
Evaluation of Proposition~\ref{prop:endpoint-determinant-general} at \(t=-1\) gives the hole probability.
For \(k\geq1\), the change of variables \(t=z-1\) identifies \(t=-1\) with \(z=0\).
Proposition~\ref{prop:endpoint-neighborhood} gives a locally uniform holomorphic expansion near \(z=0\) of the normalized probability generating function
\[
\frac{\mathbb E z^{N_I(L)}}{\mathbb P\{N_I(L)=0\}}.
\]
Cauchy's coefficient formula on circles \(|z|=R/L\), with \(R>0\) fixed, then controls \([z^k]\) and yields the factorial prefactor in Theorem~\ref{thm:fixed-counts}.

For Theorem~\ref{thm:vanishing-density}, let \(\mathbb Q_{L,r}\) denote the exponential tilt of \(N_I(L)\) with parameter \(r>0\).
The tilted count again has a determinantal counting law, with correlation eigenvalues obtained by an explicit transform of the eigenvalues of \(T_{I,L}\).
For each target count \(n\), the exact finite-\(L\) saddle \(r_{L,n}>0\) is defined by
\[
\mathbb E_{\mathbb Q_{L,r_{L,n}}}N_I(L)=n.
\]
Proposition~\ref{prop:endpoint-saddle-variance} constructs this saddle and shows that the corresponding variance is asymptotic to \(n\).
The local central limit theorem \cite[Theorem~1]{ForresterLebowitz2014}, combined with the exact change-of-measure identity, then yields the finite-saddle probability asymptotic in Proposition~\ref{prop:exact-saddle-point-probability}.

Lemma~\ref{lem:saddle-point-replacement} compares the exact and limiting saddle points and shows that the replacement preserves the exponent and variance at the required precision.
Legendre duality then identifies the exponent with \(J_I(n/L)\), completing the proof of Theorem~\ref{thm:vanishing-density}.

Section~\ref{sec:endpoint-fixed-counts} proves the endpoint expansion and the fixed-count asymptotics.
Section~\ref{sec:vanishing-density} combines exponential tilting, the determinantal local central limit theorem, and finite- and limiting-saddle analysis to prove the vanishing-density asymptotics.

\section{Endpoint and fixed-count asymptotics}
\label{sec:endpoint-fixed-counts}

In this section, we prove Theorems~\ref{thm:endpoint-expansion} and~\ref{thm:fixed-counts}.
We first use a strict spectral bound and a normal-family argument to extend the strong Szeg\H{o} expansion of \cite[Corollary~1.5]{AiFangHou2026} to the zero-free domain \(\Omega_I\).
We then derive the fixed-count asymptotics by endpoint evaluation at \(t=-1\) and coefficient extraction near \(z=0\).

\subsection{Endpoint determinant expansion}

Throughout this section, we equip the first-order Sobolev space \(W^{1,2}(\mathbb R)\) with the norm
\[
\|f\|_{W^{1,2}(\mathbb R)}
:=
\left(
\|f\|_{L^2(\mathbb R)}^2
+
\|f'\|_{L^2(\mathbb R)}^2
\right)^{1/2},
\]
where \(f'\) denotes the weak derivative.
\(\mathfrak S_1\) and \(\mathfrak S_2\) denote the trace-class and Hilbert--Schmidt ideals, respectively, with norms \(\|\cdot\|_{\mathfrak S_1}\) and \(\|\cdot\|_{\mathfrak S_2}\).
 For \(T\in\mathfrak S_1\), \(\det(\operatorname{Id}+T)\) denotes the corresponding Fredholm determinant.
We refer to \cite{Simon2005} for standard background on trace ideals and Fredholm determinants.

Define \(T_{I,L}\) to be the integral operator on \(L^2(0,\infty)\) with kernel
\[
T_{I,L}(\xi,\eta)
=
\frac{\sqrt{\xi\eta}}{\pi}
\left(\int_0^L e^{-\mathrm i(\xi-\eta)x}\,\mathrm dx\right)
\left(\int_\alpha^\beta e^{-(\xi+\eta)y}\,\mathrm dy\right).
\]
By the dilation reduction and the argument of \cite[Lemmas~2.1 and~2.6]{AiFangHou2026}, \(T_{I,L}\) is positive trace class and satisfies
\begin{equation}\label{eq:general-interval-fredholm}
\mathbb E z^{N_I(L)}
=
\det\bigl(\operatorname{Id}+(z-1)T_{I,L}\bigr),
\qquad z\in\mathbb C.
\end{equation}
The cited result proves the identity for \(z\neq0\); since both sides are continuous at \(z=0\), it also holds there, where it reads
\[
\mathbb P\{N_I(L)=0\}
=
\det\bigl(\operatorname{Id}-T_{I,L}\bigr).
\]

We retain the notation \(q_I\), \(\rho_I\), \(\Omega_I\), and \(\mathcal L_I\) from Section~\ref{subsec:main-results}.
We first establish a uniform spectral bound for \(T_{I,L}\), which provides the zero-free domain needed to extend the interior strong Szeg\H{o} expansion to the endpoint \(t=-1\).

\begin{lemma}
\label{lem:strict-spectral-bound}
For every \(L>0\),
\begin{equation}\label{eq:strict-spectral-bound}
0\leq T_{I,L}\leq\rho_I\operatorname{Id}.
\end{equation}
Consequently, \(\det(\operatorname{Id}+tT_{I,L})\neq0\) for every \(t\in\Omega_I\).
\end{lemma}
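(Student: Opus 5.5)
We will compute the quadratic form of \(T_{I,L}\) directly and bound it by a weighted \(L^2\) norm via Plancherel. For \(f\in L^2(0,\infty)\), write \(\int_0^L e^{-\mathrm i(\xi-\eta)x}\,\mathrm dx\) and \(\int_\alpha^\beta e^{-(\xi+\eta)y}\,\mathrm dy\) as integrals and use Fubini. This gives
\[
\langle T_{I,L}f,f\rangle
=
\frac1\pi\int_0^L\!\!\int_\alpha^\beta
\Bigl|\int_0^\infty \sqrt{\xi}\,e^{-\xi y}e^{-\mathrm i\xi x}f(\xi)\,\mathrm d\xi\Bigr|^2
\,\mathrm dy\,\mathrm dx .
\]
For \(f\) in a dense class, say compactly supported in \((0,\infty)\), all integrals converge absolutely. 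The identity then extends by boundedness, since \(T_{I,L}\) is already known to be trace class. This expression immediately gives \(T_{I,L}\ge0\).

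For the upper bound, enlarge the \(x\)-integration from \([0,L]\) to \(\mathbb R\). For fixed \(y\), the inner integral is the Fourier transform of \(g_y(\xi)=\sqrt\xi\,e^{-\xi y}f(\xi)\mathbf 1_{\xi>0}\), evaluated at \(x\). Plancherel gives \(\int_{\mathbb R}|\hat g_y(x)|^2\,\mathrm dx=2\pi\int_0^\infty \xi e^{-2\xi y}|f(\xi)|^2\,\mathrm d\xi\). Integrating in \(y\) over \([\alpha,\beta]\) uses
\[
\int_\alpha^\beta 2\xi e^{-2\xi y}\,\mathrm dy=e^{-2\alpha\xi}-e^{-2\beta\xi}=q_I(\xi).
\]
Hence
\[
\langle T_{I,L}f,f\rangle\le\int_0^\infty q_I(\xi)|f(\xi)|^2\,\mathrm d\xi\le\rho_I\|f\|^2 ,
\]
which is \eqref{eq:strict-spectral-bound}. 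The main thing to check is the constant bookkeeping: the \(1/\pi\) in front, the \(2\pi\) from Plancherel, and the factor \(2\xi\) from the \(y\)-integral must combine to produce exactly \(q_I\). They do: \((1/\pi)\cdot2\pi\cdot\xi\,e^{-2\xi y}=2\xi e^{-2\xi y}\).

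We also need \(\rho_I<1\). Since \(q_I(\xi)<e^{-2\alpha\xi}<1\) for \(\xi>0\), while \(q_I\to0\) as \(\xi\to0\) and as \(\xi\to\infty\), the supremum is attained and satisfies \(\rho_I<1\).

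For the determinant statement, \(T_{I,L}\) is self-adjoint and compact with spectrum in \([0,\rho_I]\). Therefore
\[
\det(\operatorname{Id}+tT_{I,L})=\prod_j(1+t\lambda_j),\qquad \lambda_j\in(0,\rho_I].
\]
The product converges absolutely because \(\sum_j\lambda_j<\infty\). A factor \(1+t\lambda_j\) vanishes only when \(t=-1/\lambda_j\in(-\infty,-\rho_I^{-1}]\), and this set lies outside \(\Omega_I\). An absolutely convergent product with no vanishing factor is nonzero, so \(\det(\operatorname{Id}+tT_{I,L})\neq0\) for every \(t\in\Omega_I\).
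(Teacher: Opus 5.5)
Your proof is correct and is essentially the paper's computation read on the adjoint side. The paper writes $T_{I,L}=\mathsf A_{I,L}\mathsf A_{I,L}^*$ and bounds $\|\mathsf A_{I,L}\|^2\le\rho_I$ by Plancherel in $x$ plus Cauchy--Schwarz in $y$. You bound $\langle T_{I,L}f,f\rangle=\|\mathsf A_{I,L}^*f\|^2$ directly by enlarging $[0,L]$ to $\mathbb R$, which even gives the sharper domination $T_{I,L}\le M_{q_I}$, with $M_{q_I}$ denoting multiplication by $q_I$.

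One cosmetic slip: with $(T_{I,L}f)(\xi)=\int T_{I,L}(\xi,\eta)f(\eta)\,\mathrm d\eta$, the phase in your quadratic form should be $e^{+\mathrm i\xi x}$. As written, your expression equals $\langle T_{I,L}\bar f,\bar f\rangle$, which is harmless since $f\mapsto\bar f$ preserves both $\|f\|$ and $\int q_I|f|^2$.
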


\begin{proof}
For \(\xi>0\) and \(z\in\mathbb H\), set
\[
u_\xi(z)=\sqrt{{\xi}/{\pi}}\,e^{\mathrm i\xi z},
\qquad
(\mathsf A_{I,L}f)(\xi)
=
\int_{[0,L]\times I}\overline{u_\xi(z)}f(z)\,\mathrm dA(z).
\]
The operator \(\mathsf A_{I,L}:L^2([0,L]\times I)\to L^2(0,\infty)\) is Hilbert--Schmidt, since
\[
\|\mathsf A_{I,L}\|_{\mathfrak S_2}^2
=
\int_{[0,L]\times I}\int_0^\infty |u_\xi(z)|^2
\,\mathrm d\xi\,\mathrm dA(z)
=
\frac{L}{4\pi}\left(\frac1\alpha-\frac1\beta\right).
\]
Moreover,
\[
\int_{[0,L]\times I}\overline{u_\xi(z)}u_\eta(z)\,\mathrm dA(z)
=
\frac{\sqrt{\xi\eta}}{\pi}
\left(\int_0^L e^{-\mathrm i(\xi-\eta)x}\,\mathrm dx\right)
\left(\int_\alpha^\beta e^{-(\xi+\eta)y}\,\mathrm dy\right),
\]
and hence \( T_{I,L}=\mathsf A_{I,L}\mathsf A_{I,L}^*. \)

To estimate \(\mathsf A_{I,L}\), extend \(f\in L^2([0,L]\times I)\) by zero to \(\mathbb R\times I\), and set
\[
\widehat f(\omega,y)
=
\frac{1}{\sqrt{2\pi}}
\int_{\mathbb R}e^{-\mathrm i\omega x}f(x,y)\,\mathrm dx.
\]
For \(\omega>0\),
\[
\begin{aligned}
(\mathsf A_{I,L}f)(\omega)
&=
\sqrt{\frac{\omega}{\pi}}
\int_\alpha^\beta e^{-\omega y}
\left(\int_0^L e^{-\mathrm i\omega x}f(x,y)\,\mathrm dx\right)
\mathrm dy \\
&=
\sqrt{2\omega}
\int_\alpha^\beta e^{-\omega y}\widehat f(\omega,y)\,\mathrm dy.
\end{aligned}
\]
Therefore, by the Cauchy--Schwarz inequality,
\[
|(\mathsf A_{I,L}f)(\omega)|^2
\leq
\left(2\omega\int_\alpha^\beta e^{-2\omega y}\,\mathrm dy\right)
\|\widehat f(\omega,\cdot)\|_{L^2(I)}^2
\leq
\rho_I\|\widehat f(\omega,\cdot)\|_{L^2(I)}^2.
\]
Integrating and applying Plancherel's theorem gives
\[
\|\mathsf A_{I,L}f\|_{L^2(0,\infty)}^2
\leq
\rho_I\int_{\mathbb R}
\|\widehat f(\omega,\cdot)\|_{L^2(I)}^2\,\mathrm d\omega
=
\rho_I\|f\|_{L^2([0,L]\times I)}^2.
\]
Thus \(\|\mathsf A_{I,L}\|^2\leq\rho_I\), and consequently
\[
0\leq T_{I,L}
=
\mathsf A_{I,L}\mathsf A_{I,L}^*
\leq
\|\mathsf A_{I,L}\|^2\operatorname{Id}
\leq
\rho_I\operatorname{Id}.
\]

Finally, if \(\det(\operatorname{Id}+tT_{I,L})=0\) for some \(t\in\Omega_I\), then \(1+t\lambda=0\) for some nonzero eigenvalue \(\lambda\) of \(T_{I,L}\).
Since \(0<\lambda\leq\rho_I\), this would give \(t=-\lambda^{-1}\in(-\infty,-\rho_I^{-1}]\), contradicting \(t\in\Omega_I\).
\end{proof}

By the general-interval strong Szeg\H{o} expansion \cite[Corollary~1.5]{AiFangHou2026}, there exists a holomorphic function \(\mathcal C_I\) on \(\{\theta\in\mathbb C:|\operatorname{Im}\theta|<\pi\}\), with \(\mathcal C_I(0)=0\), such that
\begin{equation}\label{eq:aihp-common-domain-seed}
\log\det\bigl(\operatorname{Id}+tT_{I,L}\bigr)
=
L\mathcal L_I(t)
+\mathcal C_I(\log(1+t))
+o(1)
\end{equation}
locally uniformly for \(t\in U:=\mathbb C\setminus(-\infty,-1]\), where \(\log(1+t)\) is the branch vanishing at \(t=0\).
Since \(-1\notin U\), the next proposition extends \eqref{eq:aihp-common-domain-seed} to \(\Omega_I\), thereby reaching the endpoint \(t=-1\).

\begin{proposition}
\label{prop:endpoint-determinant-general}
There exists a unique holomorphic function \(D_I\) on \(\Omega_I\), with \(D_I(0)=0\), such that
\begin{equation}\label{eq:endpoint-determinant-expansion}
\log\det\bigl(\operatorname{Id}+tT_{I,L}\bigr)
=
L\mathcal L_I(t)+D_I(t)+o(1),
\qquad L\to\infty,
\end{equation}
locally uniformly for \(t\in\Omega_I\), where the logarithm is the holomorphic branch vanishing at \(t=0\).
Moreover, \(D_I(t)=\mathcal C_I(\log(1+t))\) for \(t\in U\), and \(D_I(t)\in\mathbb R\) for \(t\in(-\rho_I^{-1},\infty)\).
\end{proposition}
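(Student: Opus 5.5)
The plan follows the Vitali--Hurwitz scheme outlined in Section~\ref{subsec:proof-strategy}. Set
\[
F_L(t):=e^{-L\mathcal L_I(t)}\det\bigl(\operatorname{Id}+tT_{I,L}\bigr),\qquad t\in\Omega_I .
\]
Each \(F_L\) is holomorphic on \(\Omega_I\). The determinant is entire in \(t\) because \(T_{I,L}\in\mathfrak S_1\). The function \(\mathcal L_I\) is holomorphic on \(\Omega_I\) because \(1+tq_I(\xi)\) stays away from \((-\infty,0]\) for \(t\in\Omega_I\) and \(0\le q_I\le\rho_I\), and \(q_I(\xi)=O(\xi)\) as \(\xi\to0\) and decays exponentially as \(\xi\to\infty\). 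By Lemma~\ref{lem:strict-spectral-bound}, \(F_L\) is zero-free on \(\Omega_I\). On the simply connected domain \(U\), \eqref{eq:aihp-common-domain-seed} gives \(F_L\to G:=\exp(\mathcal C_I(\log(1+t)))\) locally uniformly. So the whole task reduces to proving that \(\{F_L\}_{L\ge L_0}\) is locally bounded on \(\Omega_I\).

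Granting local boundedness, the argument runs as follows. Since \(\Omega_I\) is connected and \(U\subset\Omega_I\) is open and nonempty, Vitali's theorem gives \(F_L\to F\) locally uniformly on \(\Omega_I\) for some holomorphic \(F\) with \(F=G\) on \(U\). In particular \(F(0)=1\), so \(F\not\equiv0\). Hurwitz's theorem then shows that \(F\) is zero-free on \(\Omega_I\). Because \(\Omega_I\) is simply connected (a slit plane), \(F=e^{D_I}\) for a unique holomorphic \(D_I\) with \(D_I(0)=0\), and \(D_I=\mathcal C_I(\log(1+t))\) on \(U\) by the identity theorem.

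To pass from \(F_L\to F\) to the logarithmic statement \eqref{eq:endpoint-determinant-expansion}, I would fix a compact set \(K\subset\Omega_I\) and enlarge it to a compact connected set containing \(0\). On this set \(F\) is bounded away from zero, so \(F_L/F\to1\) uniformly. Hence \(\log(F_L/F)\to0\) uniformly for the principal branch, once \(L\) is large. This principal-branch logarithm agrees with the normalized branch \(\log\det-L\mathcal L_I-D_I\), because both are continuous, both vanish at \(0\), and both differ by a constant multiple of \(2\pi\mathrm i\) on a connected set. Uniqueness of \(D_I\) is clear, since it is determined as the limit.

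Reality of \(D_I\) on \((-\rho_I^{-1},\infty)\) comes from the fact that for real \(t\) in this interval, \(\det(\operatorname{Id}+tT_{I,L})=\prod(1+t\lambda_j)>0\) and \(\mathcal L_I(t)\in\mathbb R\). Thus \(F_L>0\) there, and the normalized logarithm is real along this interval, which is connected and contains \(0\). Its limit \(D_I\) is therefore real.

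The main obstacle is local boundedness of \(F_L\) near the new part of \(\Omega_I\setminus U\), which is the segment \((-\rho_I^{-1},-1]\). Away from this segment the known convergence already gives bounds. The convenient route is to bound \(\log|F_L|\) from above uniformly on compact sets. The obvious estimate \(|\det(\operatorname{Id}+tT)|\le e^{|t|\,\|T\|_{\mathfrak S_1}}\) gives only \(e^{CL}\), which is too crude, so the comparison has to be done at order \(o(L)\) and even \(O(1)\).

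My first attempt would be to write \(T_{I,L}\) as a Wiener--Hopf-type truncation of the multiplication by \(q_I\): after conjugation, \(T_{I,L}\) is close to the operator with symbol \(q_I(\xi)\) compressed to an interval of length \(L\). I would then redo the estimates of \cite{AiFangHou2026} with \(\log(1+tq_I)\) in place of the symbol. The key point is that \(\log(1+tq_I)\) remains smooth and bounded, with the required Sobolev/\(W^{1,2}\) control, uniformly for \(t\) in compacts of \(\Omega_I\) rather than only \(U\). The paper's comparison scheme is itself driven by \(\|\log(1+tq_I)\|_{W^{1,2}}\)-type quantities, and Lemma~\ref{lem:strict-spectral-bound} guarantees the resolvent \((\operatorname{Id}+tT_{I,L})^{-1}\) is uniformly bounded for \(t\in K\). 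Together these should give \(|\log F_L(t)|\le C_K\) on \(K\), and this is the step I expect to require the real work.
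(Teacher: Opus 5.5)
Your reduction to local boundedness is correct and is the paper's argument, and so is everything downstream of it: Vitali plus the identity theorem on $U$, Hurwitz, the normalized logarithm on the slit plane, and reality via $F_L>0$ on $(-\rho_I^{-1},\infty)$. Your principal-log-of-$F_L/F$ argument is an equivalent substitute for the paper's integration of $E_{I,L}'/E_{I,L}$ over a radial hull.

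The gap is the step you flag yourself. Local boundedness near $(-\rho_I^{-1},-1]$ is only asserted to follow, and the mechanism you describe would not deliver it as stated.

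Your observation that $\psi_{I,t}=\log(1+tq_I)$ has uniform $L^1\cap W^{1,2}\cap L^\infty$ bounds for $t$ in compacts of $\Omega_I$ is exactly the right input. However, it applies to a genuine truncated Wiener--Hopf determinant, and $\det(\operatorname{Id}+tT_{I,L})$ is not one. The argument therefore has to run in two stages.

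\emph{First stage.} Pass to the comparison operator $\widetilde W_{I,L}=V_{I,L}V_{I,L}^*$, whose kernel is $(2\pi)^{-1}\sqrt{q_I(\xi)q_I(\eta)}\int_0^Le^{-\mathrm i(\xi-\eta)x}\,\mathrm dx$. Sylvester's identity gives $\det(\operatorname{Id}+t\widetilde W_{I,L})=\det(\operatorname{Id}+tW_L(q_I))$, where $W_L(q_I)=V_{I,L}^*V_{I,L}$ is the truncated Wiener--Hopf operator on $L^2(0,L)$. Moreover, $\operatorname{Id}+tW_L(q_I)=W_L(e^{\psi_{I,t}})$ and $\operatorname{tr}W_L(\psi_{I,t})=L\mathcal L_I(t)$. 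Only at this point do the symbol bounds feed the $\lambda$-interpolation of \cite[Lemmas~5.7--5.8]{AiFangHou2026}. That yields a uniform bound on $e^{-L\mathcal L_I(t)}\det(\operatorname{Id}+t\widetilde W_{I,L})$ for $t\in K$ and large $L$.

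\emph{Second stage.} Transfer this bound to $T_{I,L}$. This needs a \emph{bounded} trace-norm comparison, $\limsup_L\|T_{I,L}-\widetilde W_{I,L}\|_{\mathfrak S_1}<\infty$, which comes from \cite[Proposition~5.5]{AiFangHou2026} after dilation; an error that is merely $o(L)$ would not give an $O(1)$ bound. You then use the factorization
\[
\det(\operatorname{Id}+tT_{I,L})
=
\det(\operatorname{Id}+t\widetilde W_{I,L})\,
\det\bigl(\operatorname{Id}+t(\operatorname{Id}+t\widetilde W_{I,L})^{-1}(T_{I,L}-\widetilde W_{I,L})\bigr)
\]
together with $|\det(\operatorname{Id}+A)|\leq e^{\|A\|_{\mathfrak S_1}}$.

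The resolvent that must be uniformly bounded here is that of the comparison operator. This follows from $0\leq\widetilde W_{I,L}\leq\rho_I\operatorname{Id}$ (the factorization, Plancherel, and $q_I\leq\rho_I$), which gives $\|(\operatorname{Id}+t\widetilde W_{I,L})^{-1}\|\leq\bigl(\inf_{t\in K,\,0\leq u\leq\rho_I}|1+tu|\bigr)^{-1}$.

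The resolvent of $T_{I,L}$ supplied by Lemma~\ref{lem:strict-spectral-bound}, which you invoke, points the wrong way. Factoring through $(\operatorname{Id}+tT_{I,L})^{-1}$ bounds $|\det(\operatorname{Id}+t\widetilde W_{I,L})|/|\det(\operatorname{Id}+tT_{I,L})|$ from above. That only bounds $|F_L|$ from below in terms of the Wiener--Hopf quantity, whereas Vitali needs an upper bound. A trace-norm-bounded perturbation alone cannot keep $|\det(\operatorname{Id}+A)|$ away from zero, so you cannot invert that inequality.

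Relatedly, you only need $\sup_K|F_L|\leq C_K$, not the two-sided $|\log F_L|\leq C_K$ you aim for. Zero-freeness of the limit is already supplied by Hurwitz.
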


\begin{proof}
Set
\[
E_{I,L}(t)
:=
e^{-L\mathcal L_I(t)}
\det\bigl(\operatorname{Id}+tT_{I,L}\bigr).
\]
By \eqref{eq:aihp-common-domain-seed}, \(E_{I,L}\) already converges locally uniformly on \(U\).
To extend this convergence to \(\Omega_I\) by Vitali's theorem, it remains to establish local boundedness of the tail family \(\{E_{I,L}\}\) on \(\Omega_I\).

Let \(\widetilde W_{I,L}\) be the integral operator on \(L^2(0,\infty)\) with kernel
\[
\widetilde W_{I,L}(\xi,\eta)
=
\frac{\sqrt{q_I(\xi)q_I(\eta)}}{2\pi}
\int_0^L e^{-\mathrm i(\xi-\eta)x}\,\mathrm dx,
\qquad \xi,\eta>0,
\]
and set
\[
\widetilde E_{I,L}(t)
:=
e^{-L\mathcal L_I(t)}
\det\bigl(\operatorname{Id}+t\widetilde W_{I,L}\bigr).
\]
We first bound \(\widetilde E_{I,L}\), and then transfer the bound to \(E_{I,L}\) using the trace-class comparison between \(T_{I,L}\) and \(\widetilde W_{I,L}\).

Extend \(q_I\) by zero to \((-\infty,0]\), and let \(W_L(\sigma)\) denote the truncated Wiener--Hopf operator on \(L^2(0,L)\) with bounded symbol \(\sigma\), in the Fourier-multiplier sense.
In particular,
\[
(W_L(q_I)g)(x')
=
\frac{1}{2\pi}
\int_0^L\int_{\mathbb R}
q_I(\xi)e^{\mathrm i\xi(x'-x)}g(x)\,
\mathrm d\xi\,\mathrm dx,
\qquad 0<x'<L.
\]
For \(g\in L^2(0,L)\), define
\[
(V_{I,L}g)(\xi)
=
\sqrt{\frac{q_I(\xi)}{2\pi}}
\int_0^L e^{-\mathrm i\xi x}g(x)\,\mathrm dx .
\]
Since \(\|V_{I,L}\|_{\mathfrak S_2}^2 =(L/2\pi)\int_0^\infty q_I(\xi)\,\mathrm d\xi<\infty\), we have \(V_{I,L}\in\mathfrak S_2\), and direct computation gives
\[
W_L(q_I)=V_{I,L}^*V_{I,L},
\qquad
\widetilde W_{I,L}=V_{I,L}V_{I,L}^*.
\]
Hence, by Sylvester's identity,
\[
\det\bigl(\operatorname{Id}+t\widetilde W_{I,L}\bigr)
=
\det\bigl(\operatorname{Id}+tW_L(q_I)\bigr).
\]

Fix \(K\Subset\Omega_I\).
By the definition of \(\Omega_I\), the compact set
\[
\{1+tu:t\in K,\ 0\leq u\leq\rho_I\}
\]
is disjoint from \((-\infty,0]\).
In particular,
\[
m_K
:=
\inf_{\substack{t\in K\\0\leq u\leq\rho_I}}
|1+tu|
>0.
\]
Set \(\psi_{I,t}:=\log(1+tq_I)\), where the principal logarithm is used.
Then \(\operatorname{Id}+tW_L(q_I)=W_L(e^{\psi_{I,t}})\), and
\[
\operatorname{tr}W_L(\psi_{I,t})
=
\frac{L}{2\pi}
\int_{\mathbb R}\psi_{I,t}(\xi)\,\mathrm d\xi
=
L\mathcal L_I(t).
\]

These identities put \(\widetilde E_{I,L}\) in the normalized Wiener--Hopf form used in the proof of \cite[Lemma~5.8]{AiFangHou2026}.
We verify the symbol estimates needed there.
Since the zero extension of \(q_I\) belongs to \(L^1(\mathbb R)\cap W^{1,2}(\mathbb R)\), uniformly for \(t\in K\),
\[
|\psi_{I,t}|\leq C_Kq_I,
\qquad
|\psi_{I,t}'|\leq C_K|q_I'|.
\]
Thus \(\{\psi_{I,t}:t\in K\}\) is bounded in \(L^1(\mathbb R)\cap W^{1,2}(\mathbb R)\cap L^\infty(\mathbb R)\).
For \(\lambda\) in any fixed closed disk containing \([0,1]\), the chain and product rules give the same uniform \(L^1\cap W^{1,2}\cap L^\infty\) bounds for \(e^{\lambda\psi_{I,t}}-1\) and \((e^{\lambda\psi_{I,t}}-1)\psi_{I,t}\).
By Plancherel's theorem and the Cauchy--Schwarz inequality, these bounds give the Fourier-side estimates required in \cite[Lemma~5.7]{AiFangHou2026}; the \(L^\infty\)-bound also gives the uniform operator-norm control of the exponential factor used in the interpolation.
Hence the \(\lambda\)-interpolation argument in the proof of \cite[Lemma~5.8]{AiFangHou2026}, with \(\psi_\theta\) replaced by \(\psi_{I,t}\), yields
\[
\limsup_{L\to\infty}
\sup_{t\in K}|\widetilde E_{I,L}(t)|<\infty.
\]

We next transfer this bound to \(E_{I,L}\).
Set \(a=\beta/\alpha\).
Under the unitary dilation on \(L^2(0,\infty)\), the pair \((T_{I,L},\widetilde W_{I,L})\) is unitarily equivalent to \((T_{[1,a],L/\alpha},\widetilde W_{[1,a],L/\alpha})\).
Hence \cite[Proposition~5.5]{AiFangHou2026} gives
\[
\limsup_{L\to\infty}
\|T_{I,L}-\widetilde W_{I,L}\|_{\mathfrak S_1}<\infty.
\]
The factorization above and Plancherel's theorem also give \(0\leq\widetilde W_{I,L}\leq\rho_I\operatorname{Id}\).
Therefore, for \(t\in K\),
\[
\bigl\|
(\operatorname{Id}+t\widetilde W_{I,L})^{-1}
\bigr\|
\leq m_K^{-1}.
\]
By multiplicativity of the Fredholm determinant,
\[
\det\bigl(\operatorname{Id}+tT_{I,L}\bigr)
=
\det\bigl(\operatorname{Id}+t\widetilde W_{I,L}\bigr)
\det\!\left(
\operatorname{Id}
+t(\operatorname{Id}+t\widetilde W_{I,L})^{-1}
(T_{I,L}-\widetilde W_{I,L})
\right).
\]
The trace norm of the perturbation in the second determinant is therefore bounded by \(C_K\), uniformly for \(t\in K\) and all sufficiently large \(L\).
Using \(|\det(\operatorname{Id}+A)| \leq e^{\|A\|_{\mathfrak S_1}}\), we obtain
\[
\limsup_{L\to\infty}\sup_{t\in K}|E_{I,L}(t)|<\infty.
\]
Thus the tail family \(\{E_{I,L}\}\) is locally bounded on \(\Omega_I\).

By \eqref{eq:aihp-common-domain-seed}, \(E_{I,L}(t)\to\exp\{\mathcal C_I(\log(1+t))\}\) locally uniformly on \(U\).
Applying Vitali's theorem to an arbitrary sequence \(L_n\to\infty\), and then the identity theorem, shows that \(E_{I,L}\) converges locally uniformly on \(\Omega_I\) to a holomorphic function \(E_I\), uniquely determined by
\[
E_I(t)=\exp\{\mathcal C_I(\log(1+t))\},
\qquad t\in U.
\]

It remains to pass to logarithms.
By Lemma~\ref{lem:strict-spectral-bound}, each \(E_{I,L}\) is zero-free on \(\Omega_I\); since \(E_I(0)=1\), Hurwitz's theorem shows that \(E_I\) is zero-free as well.
As \(\Omega_I\) is simply connected, let \(D_I\) be the unique holomorphic logarithm of \(E_I\) with \(D_I(0)=0\), and let \(G_{I,L}\) be the normalized holomorphic logarithm of \(E_{I,L}\).
Then
\[
G_{I,L}(t)
=
\log\det\bigl(\operatorname{Id}+tT_{I,L}\bigr)
-L\mathcal L_I(t).
\]
Since \(G_{I,L}'=E_{I,L}'/E_{I,L}\to E_I'/E_I=D_I'\) locally uniformly on \(\Omega_I\), integration over the compact radial hull of any \(K\Subset\Omega_I\) gives \(G_{I,L}\to D_I\) locally uniformly.
This proves \eqref{eq:endpoint-determinant-expansion}.

On \(U\), comparison with \eqref{eq:aihp-common-domain-seed} gives \(D_I(t)=\mathcal C_I(\log(1+t))\).
Finally, for real \(t>-\rho_I^{-1}\), both \(e^{-L\mathcal L_I(t)}\) and \(\det(\operatorname{Id}+tT_{I,L})\) are positive by Lemma~\ref{lem:strict-spectral-bound}.
Hence \(E_{I,L}(t)>0\); passing to the limit and using the zero-freeness of \(E_I\) gives \(E_I(t)>0\).
Thus \(\operatorname{Im}D_I(t)\in2\pi\mathbb Z\).
Since \(D_I(0)=0\) and \((-\rho_I^{-1},\infty)\) is connected, continuity gives \(D_I(t)\in\mathbb R\) for \(t\in(-\rho_I^{-1},\infty)\).
\end{proof}

\begin{proof}[Proof of Theorem~\ref{thm:endpoint-expansion}]
Set \(z=1+t\) in \eqref{eq:general-interval-fredholm}.
The zero-free assertion follows from Lemma~\ref{lem:strict-spectral-bound}, and the remaining assertions follow from Proposition~\ref{prop:endpoint-determinant-general}.
\end{proof}

\begin{proof}[Proof of Theorem~\ref{thm:fixed-counts}: the case \(k=0\)]
Since \(0<\rho_I<1\), we have \(-1\in\Omega_I\).
Evaluating Theorem~\ref{thm:endpoint-expansion} at \(t=-1\) gives
\[
\mathbb P\{N_I(L)=0\}
=
\exp\{L\mathcal L_I(-1)+D_I(-1)+o(1)\}.
\]
By \eqref{eq:endpoint-rate} and \(e^{o(1)}=1+o(1)\), this proves the assertion for \(k=0\).
\end{proof}

\subsection{Endpoint generating functions and fixed-count asymptotics}
\label{subsec:endpoint-neighborhood}

We now normalize the probability generating function by the hole probability and transfer the endpoint expansion in Theorem~\ref{thm:endpoint-expansion} to a neighborhood of \(z=0\).
Coefficient extraction from the normalized generating function, together with the hole asymptotic, then yields the fixed-count probabilities.

Since \(\mathbb P\{N_I(L)=0\}>0\), define
\[
R_{I,L}(z)
:=
\frac{\mathbb E z^{N_I(L)}}
{\mathbb P\{N_I(L)=0\}},\qquad 
B_I(z)
:=
D_I(z-1)-D_I(-1).
\]
For \(|z|<\rho_I^{-1}-1\), we have \(z-1\in\Omega_I\).
Hence \(R_{I,L}\) is zero-free and \(B_I\) is holomorphic on this disk, with \(R_{I,L}(0)=1\) and \(B_I(0)=0\).

For the expansion below, we first record the holomorphic extension of \(A_I\) to the same disk.
Namely, the function \(A_I\) defined in \eqref{eq:endpoint-A} satisfies
\[
A_I(z)
=
\mathcal L_I(z-1)-\mathcal L_I(-1)
=
\frac{1}{2\pi}
\int_0^\infty
\log\left(
1+z\frac{q_I(\xi)}{1-q_I(\xi)}
\right)\,\mathrm d\xi,
\]
where the logarithm is the branch normalized to vanish at \(z=0\).
Since \(q_I/(1-q_I)\in L^1(0,\infty)\cap L^\infty(0,\infty)\), the integral converges locally uniformly on this disk.
Since \(A_I(0)=0\) and \(A_I'(0)=\kappa_I\), holomorphy at the origin gives, as \(z\to0\),
\begin{equation}\label{eq:endpoint-taylor}
A_I(z)=\kappa_I z+O(z^2),
\qquad
zA_I'(z)=\kappa_I z+O(z^2),
\end{equation}
and
\begin{equation}\label{eq:endpoint-variance-taylor}
zA_I'(z)+z^2A_I''(z)
=
\kappa_I z+O(z^2).
\end{equation}

\begin{proposition}
\label{prop:endpoint-neighborhood}
Let \(\log R_{I,L}\) be the holomorphic branch with value \(0\) at \(z=0\).
Then, for \(j=0,1,2\),
\[
\left(z\frac{\mathrm d}{\mathrm dz}\right)^j
\log R_{I,L}(z)
=
L\left(z\frac{\mathrm d}{\mathrm dz}\right)^j A_I(z)
+
\left(z\frac{\mathrm d}{\mathrm dz}\right)^j B_I(z)
+o(1),
\qquad L\to\infty,
\]
locally uniformly for \(|z|<\rho_I^{-1}-1\).
\end{proposition}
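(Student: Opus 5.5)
The plan is to reduce the statement to Proposition~\ref{prop:endpoint-determinant-general} by the substitution \(t=z-1\), and then to pass from the function to its derivatives using Cauchy estimates.

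\emph{Step 1: the case \(j=0\).} For \(|z|<\rho_I^{-1}-1\) we have \(|t+1|<\rho_I^{-1}-1\), so \(t\) lies in the closed disk \(\{|t+1|\le r\}\) with \(r<\rho_I^{-1}-1\). This disk avoids \((-\infty,-\rho_I^{-1}]\), since \(|t+1|\geq\rho_I^{-1}-1\) on that ray. Hence compact subsets \(K\) of the \(z\)-disk map to compact subsets \(K-1\Subset\Omega_I\).

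By \eqref{eq:general-interval-fredholm} and Lemma~\ref{lem:strict-spectral-bound}, the function \(z\mapsto\log\det(\operatorname{Id}+(z-1)T_{I,L})\) is a holomorphic branch on this simply connected disk. Subtracting its value at \(z=0\), which is the real number \(\log\mathbb P\{N_I(L)=0\}\) by the \(k=0\) evaluation, gives a holomorphic function vanishing at \(0\) whose exponential is \(R_{I,L}\). By uniqueness of normalized branches, this function equals \(\log R_{I,L}\).

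Applying \eqref{eq:endpoint-determinant-expansion} at \(t=z-1\) and at \(t=-1\), and subtracting, gives
\[
\log R_{I,L}(z)=L\bigl(\mathcal L_I(z-1)-\mathcal L_I(-1)\bigr)+D_I(z-1)-D_I(-1)+o(1).
\]
The right-hand side is \(LA_I(z)+B_I(z)+o(1)\), locally uniformly. Two points need checking here. The branch of \(\log\det\) in Proposition~\ref{prop:endpoint-determinant-general} is normalized at \(t=0\), not at \(t=-1\), so the additive \(2\pi\mathrm i\)-ambiguity must cancel; it does, because both branches are continuous on the connected set \(K-1\cup\{-1\}\) inside \(\Omega_I\), and the error term is taken from the same global branch. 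In addition, \(\mathcal L_I(z-1)-\mathcal L_I(-1)\) must agree with the integral formula for \(A_I\). This follows from \(\log(1+(z-1)q)-\log(1-q)=\log\bigl(1+zq/(1-q)\bigr)\) with consistent branches, verified at \(z=0\) and extended by continuity.

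\emph{Step 2: the cases \(j=1,2\).} Set \(F_L:=\log R_{I,L}-LA_I-B_I\). Each \(F_L\) is holomorphic on the disk, and \(F_L\to0\) locally uniformly by Step 1. The Weierstrass theorem, made quantitative by Cauchy's estimates on slightly larger concentric circles, gives \(F_L'\to0\) and \(F_L''\to0\) locally uniformly. Since \(z\frac{\mathrm d}{\mathrm dz}\) and \((z\frac{\mathrm d}{\mathrm dz})^2=z^2\frac{\mathrm d^2}{\mathrm dz^2}+z\frac{\mathrm d}{\mathrm dz}\) have coefficients bounded on compacts, \((z\frac{\mathrm d}{\mathrm dz})^jF_L\to0\) locally uniformly. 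By linearity this is exactly the claimed expansion.

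\emph{Main obstacle.} The only delicate point is the branch bookkeeping in Step 1. One must make sure the holomorphic logarithm from Proposition~\ref{prop:endpoint-determinant-general}, defined on all of \(\Omega_I\), restricts consistently to the \(z\)-disk, and that the \(o(1)\) is uniform on the compacts \(K-1\). Both follow because the disk \(\{|t+1|<\rho_I^{-1}-1\}\) is contained in \(\Omega_I\), so no monodromy arises. Everything else is routine Cauchy-estimate differentiation.
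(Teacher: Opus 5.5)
Your proposal is correct and follows essentially the same route as the paper's proof. You subtract the endpoint expansion of Theorem~\ref{thm:endpoint-expansion} at \(t=z-1\) and \(t=-1\), identify the difference with the normalized branch \(\log R_{I,L}\), and obtain the cases \(j=1,2\) from Cauchy's estimates; your added branch bookkeeping and the check of the integral formula for \(A_I\) are consistent with, though not needed beyond, what the paper records.
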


\begin{proof}
For \(|z|<\rho_I^{-1}-1\), both \(z-1\) and \(-1\) belong to \(\Omega_I\).
Set
\[
H_{I,L}(t)
:=
\log\mathbb E(1+t)^{N_I(L)}
-
L\mathcal L_I(t)
-
D_I(t).
\]
By Theorem~\ref{thm:endpoint-expansion}, \(H_{I,L}\) is holomorphic on \(\Omega_I\) and converges to zero locally uniformly there.
Evaluating at \(t=z-1\) and \(t=-1\), and subtracting, gives
\[
\log R_{I,L}(z)
=
LA_I(z)+B_I(z)
+
H_{I,L}(z-1)-H_{I,L}(-1).
\]
Here the difference of the logarithms is the holomorphic logarithm of \(R_{I,L}\) that vanishes at \(z=0\), and hence is the normalized branch \(\log R_{I,L}\).
The last two terms involving \(H_{I,L}\) form a holomorphic function that converges to zero locally uniformly, which proves the case \(j=0\).

Cauchy's estimates imply that its first two derivatives also converge to zero locally uniformly.
Applying \(z\,\mathrm d/\mathrm dz\) once and twice therefore proves the cases \(j=1,2\).
\end{proof}

\begin{proof}[Completion of the proof of Theorem~\ref{thm:fixed-counts}]
Let \(k\geq1\) be fixed.
By the definition of \(R_{I,L}\),
\[
\mathbb P\{N_I(L)=k\}
=
\mathbb P\{N_I(L)=0\}[z^k]R_{I,L}(z),
\]
where \([z^k]R_{I,L}(z)\) denotes the coefficient of \(z^k\) in the Taylor expansion of \(R_{I,L}\) at \(z=0\).
Proposition~\ref{prop:endpoint-neighborhood} gives
\[
\log R_{I,L}(w/L)
=
LA_I(w/L)+B_I(w/L)+o(1).
\]
By \eqref{eq:endpoint-taylor}, \(LA_I(w/L)=\kappa_Iw+o(1)\), while \(B_I(w/L)=o(1)\) since \(B_I\) is holomorphic near the origin and \(B_I(0)=0\).
Hence
\[
\log R_{I,L}(w/L)
=
\kappa_Iw+o(1)
\]
locally uniformly for \(w\in\mathbb C\).
Hence \( R_{I,L}(w/L)\longrightarrow e^{\kappa_Iw} \) locally uniformly.
Cauchy's integral formula for coefficients yields
\[
L^{-k}[z^k]R_{I,L}(z)
=
[w^k]R_{I,L}(w/L)
\longrightarrow
\frac{\kappa_I^k}{k!}.
\]
Therefore,
\[
[z^k]R_{I,L}(z)
=
\frac{(L\kappa_I)^k}{k!}(1+o(1)).
\]
Combining this with the already proved case \(k=0\) completes the proof.
\end{proof}

\section{Vanishing-density asymptotics}
\label{sec:vanishing-density}

In this section, we prove Theorem~\ref{thm:vanishing-density}.
We introduce an exponential tilt of \(N_I(L)\), construct the exact finite-\(L\) saddle \(r_{L,n}\), and derive the corresponding point-probability asymptotic using the local central limit theorem for determinantal point processes.
We then compare \(r_{L,n}\) with the limiting saddle \(s_{L,n}\) and identify the exponent through the variational formula for \(J_I\).

\subsection{Exponential tilting and finite-\texorpdfstring{\(L\)}{L}
saddle points}
\label{subsec:tilting-finite-saddles}

We first identify a determinantal representation of the exponentially tilted count and record exact and endpoint asymptotics for its mean and variance.

Let \((\lambda_{j,L})_{j\geq1}\) be the eigenvalues of \(T_{I,L}\), repeated with multiplicity.
By \eqref{eq:general-interval-fredholm} and Lemma~\ref{lem:strict-spectral-bound}, \( 0\leq\lambda_{j,L}\leq\rho_I<1, \) and hence
\[
R_{I,L}(z)
=
\frac{\mathbb E z^{N_I(L)}}
{\mathbb P\{N_I(L)=0\}}
=
\frac{
\det\bigl(\operatorname{Id}+(z-1)T_{I,L}\bigr)
}{
\det\bigl(\operatorname{Id}-T_{I,L}\bigr)
}
=
\prod_{j\geq1}
\left(
1+z\frac{\lambda_{j,L}}{1-\lambda_{j,L}}
\right).
\]
By the Bernoulli representation in \cite[Theorem~7]{HoughEtAl2006} and the identification of the common nonzero spectrum in \cite[Lemmas~2.5--2.6]{AiFangHou2026}, \(N_I(L)\) is distributed as an at most countable sum of independent Bernoulli variables with parameters \((\lambda_{j,L})_{j\geq1}\).

For \(r>0\), define the exponentially tilted measure by
\begin{equation}
\frac{\mathrm d\mathbb Q_{L,r}}{\mathrm d\mathbb P}
:=
\frac{r^{N_I(L)}}{\mathbb E[r^{N_I(L)}]}
=
\frac{r^{N_I(L)}}
{R_{I,L}(r)\mathbb P\{N_I(L)=0\}}.
\label{eq:exponential-tilt}
\end{equation}

\begin{lemma}
\label{lem:tilted-determinantal-law}
For \(r>0\), under \(\mathbb Q_{L,r}\), the random variable \(N_I(L)\) has the same distribution as an at most countable sum of independent Bernoulli variables with parameters
\[
p_{j,L}^{(r)}
=
\frac{r\lambda_{j,L}}
{1-\lambda_{j,L}+r\lambda_{j,L}},
\qquad j\geq1.
\]
Moreover, these parameters are the eigenvalues of the positive trace-class contraction
\[
T_{I,L}^{(r)}
:=
rT_{I,L}
\bigl(\operatorname{Id}+(r-1)T_{I,L}\bigr)^{-1}.
\]
Consequently, under \(\mathbb Q_{L,r}\), \(N_I(L)\) has the same distribution as the total number of points of a determinantal point process whose correlation operator is \(T_{I,L}^{(r)}\).
\end{lemma}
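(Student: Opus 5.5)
The plan is to work at the level of generating functions. Under \(\mathbb P\), the Bernoulli representation recorded just before the lemma gives
\[
\mathbb E z^{N_I(L)}=\prod_{j\geq1}\bigl(1+(z-1)\lambda_{j,L}\bigr),
\]
with the product absolutely convergent because \(\sum_j\lambda_{j,L}=\operatorname{tr}T_{I,L}<\infty\). By \eqref{eq:exponential-tilt}, the tilted generating function is
\[
\mathbb E_{\mathbb Q_{L,r}}z^{N_I(L)}=\frac{\mathbb E (rz)^{N_I(L)}}{\mathbb E r^{N_I(L)}}=\prod_{j\geq1}\frac{1-\lambda_{j,L}+rz\lambda_{j,L}}{1-\lambda_{j,L}+r\lambda_{j,L}}.
\]
Each factor can be rewritten as \(1+(z-1)p^{(r)}_{j,L}\), with \(p^{(r)}_{j,L}\) as in the statement. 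Since \(r>0\) and \(0\leq\lambda_{j,L}\leq\rho_I<1\), the denominator \(1-\lambda_{j,L}+r\lambda_{j,L}\) is positive, so \(p^{(r)}_{j,L}\in[0,1)\). Moreover \(p^{(r)}_{j,L}\leq \max(r,1)\,\lambda_{j,L}/(1-\rho_I)\), so \(\sum_j p^{(r)}_{j,L}<\infty\). Hence an independent Bernoulli sum with these parameters is a.s. finite, and its generating function is exactly this product. The product is entire in \(z\), being a polynomial-type product with summable parameters. A nonnegative integer random variable is determined by its generating function on \([0,1]\) (indeed by its Taylor coefficients at \(0\)), which proves the first claim.

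For the operator statement, I use the spectral theorem for the positive trace-class operator \(T_{I,L}\). The operator \(\operatorname{Id}+(r-1)T_{I,L}\) has spectrum contained in \(\{1+(r-1)\lambda:\lambda\in[0,\rho_I]\}\), which is bounded below by \(\min(1,1-\rho_I+r\rho_I)>0\), so it is invertible. It also commutes with \(T_{I,L}\). Therefore \(T^{(r)}_{I,L}=f_r(T_{I,L})\) with
\[
f_r(\lambda)=\frac{r\lambda}{1+(r-1)\lambda}.
\]
On the spectrum, \(f_r\) is nonnegative and strictly less than \(1\), since \(r\lambda<1-\lambda+r\lambda\) whenever \(\lambda<1\). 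Consequently \(T^{(r)}_{I,L}\) is self-adjoint, positive, and a contraction. It is trace class because it is the product of the trace-class operator \(rT_{I,L}\) with a bounded inverse. Its eigenvalues are \(f_r(\lambda_{j,L})=p^{(r)}_{j,L}\), with the same multiplicities and eigenvectors, and \(f_r(0)=0\) ensures that the kernel of \(T_{I,L}\) contributes nothing.

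For the final claim, I invoke the Macchi–Soshnikov existence theorem, or equivalently Hough–Krishnapur–Peres–Virág. Realize \(T^{(r)}_{I,L}\) as a positive contraction in \(\mathfrak S_1\) on \(L^2(0,\infty)\), or on any \(L^2\) space carrying a locally trace-class Hermitian kernel with this spectrum, such as a kernel \(\sum_j p_j\phi_j(x)\overline{\phi_j(y)}\). This yields a determinantal point process with that correlation operator. By \cite[Theorem~7]{HoughEtAl2006}, its total number of points is distributed as a sum of independent Bernoulli\((p^{(r)}_{j,L})\) variables. Equivalently, its generating function is \(\det(\operatorname{Id}+(z-1)T^{(r)}_{I,L})=\prod_j(1+(z-1)p^{(r)}_{j,L})\), which matches the law obtained above.

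The only point requiring care is justifying the interchange of the tilt with the infinite product, that is, showing that \(\mathbb E r^{N_I(L)}\) is finite and equals the product for all \(r>0\), including \(r>1\). I would handle this by monotone convergence applied to partial Bernoulli sums together with the summability bound \(\sum_j\lambda_{j,L}<\infty\). Alternatively, it follows directly from \eqref{eq:general-interval-fredholm} at \(z=r\) and \(z=rz'\), since those identities hold for all complex \(z\).
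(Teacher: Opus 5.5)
Your proposal is correct and takes essentially the same route as the paper. It rewrites the tilted generating function $R_{I,L}(rz)/R_{I,L}(r)$ as $\prod_j(1+(z-1)p^{(r)}_{j,L})$, gets summability from a bound $p^{(r)}_{j,L}\leq C_r\lambda_{j,L}/(1-\rho_I)$, identifies $T^{(r)}_{I,L}$ by functional calculus, and concludes with the existence theorem and \cite[Theorem~7]{HoughEtAl2006}. Your constant $\max(r,1)$ is valid, though the paper's sharper bound $p^{(r)}_{j,L}\leq r\lambda_{j,L}/(1-\rho_I)$ follows directly from $1-\lambda_{j,L}+r\lambda_{j,L}\geq1-\rho_I$, and your interchange concern is settled exactly as you suggest, since \eqref{eq:general-interval-fredholm} holds for all complex $z$.
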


\begin{proof}
By \eqref{eq:exponential-tilt} and the product representation of \(R_{I,L}\),
\[
\mathbb E_{\mathbb Q_{L,r}}z^{N_I(L)}
=
\frac{R_{I,L}(rz)}{R_{I,L}(r)}
=
\prod_{j\geq1}
\frac{
1+rz\lambda_{j,L}/(1-\lambda_{j,L})
}{
1+r\lambda_{j,L}/(1-\lambda_{j,L})
}
=
\prod_{j\geq1}
\left(1+(z-1)p_{j,L}^{(r)}\right).
\]
Since \(p_{j,L}^{(r)}\leq r\lambda_{j,L}/(1-\rho_I)\) and \(\sum_j\lambda_{j,L}<\infty\), we have \(\sum_jp_{j,L}^{(r)}<\infty\).
Thus the right-hand side is the probability generating function of an at most countable sum of independent Bernoulli variables with parameters \((p_{j,L}^{(r)})_{j\geq1}\).

Since \(0\leq\lambda_{j,L}\leq\rho_I<1\), functional calculus shows that \(T_{I,L}^{(r)}\) is positive and has eigenvalues \(p_{j,L}^{(r)}\in[0,1)\).
The preceding bound also shows that \(T_{I,L}^{(r)}\) is trace class, and hence it is a positive trace-class contraction.
By the standard existence criterion for determinantal point processes, \(T_{I,L}^{(r)}\) is the correlation operator of a determinantal point process; see \cite[Theorem~22]{HoughEtAl2006}.
By \cite[Theorem~7]{HoughEtAl2006}, its total number of points has the Bernoulli representation above, proving the final assertion.
\end{proof}

We next record the mean and variance of \(N_I(L)\) under the exponentially tilted measure, together with their endpoint asymptotics, which determine the exact saddle point and the Gaussian prefactor.

\begin{lemma}
\label{lem:tilted-mean-variance}
For \(r>0\), set \(M_{I,L}(r):=\mathbb E_{\mathbb Q_{L,r}}N_I(L)\) and \(V_{I,L}(r):=\operatorname{Var}_{\mathbb Q_{L,r}}N_I(L)\).
Then
\begin{align}
M_{I,L}(r)
&=
r\frac{\mathrm d}{\mathrm dr}\log R_{I,L}(r)
=
\sum_{j\geq1}
\frac{r\lambda_{j,L}}
{1-\lambda_{j,L}+r\lambda_{j,L}},
\label{eq:exact-tilted-mean}\\
V_{I,L}(r)
&=
r\frac{\mathrm d}{\mathrm dr}M_{I,L}(r)
=
\sum_{j\geq1}
\frac{r\lambda_{j,L}(1-\lambda_{j,L})}
{(1-\lambda_{j,L}+r\lambda_{j,L})^2}.
\label{eq:exact-tilted-variance}
\end{align}
Moreover, there exists \(r_0>0\) such that, as \(L\to\infty\),
\begin{align}
M_{I,L}(r)
&=
L\kappa_Ir+O(Lr^2)+O(r)+o(1),
\label{eq:endpoint-tilted-mean}\\
V_{I,L}(r)
&=
L\kappa_Ir+O(Lr^2)+O(r)+o(1),
\label{eq:endpoint-tilted-variance}
\end{align}
uniformly for \(0<r\leq r_0\).
\end{lemma}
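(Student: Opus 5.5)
The exact formulas come straight from the product representation of \(R_{I,L}\) and Lemma~\ref{lem:tilted-determinantal-law}. Under \(\mathbb Q_{L,r}\) the probability generating function is \(R_{I,L}(rz)/R_{I,L}(r)\). Differentiating \(\log\) of this at \(z=1\) gives \(M_{I,L}(r)=r\,\frac{\mathrm d}{\mathrm dr}\log R_{I,L}(r)\). Likewise \(V_{I,L}(r)=\bigl(z\frac{\mathrm d}{\mathrm dz}\bigr)^2\log R_{I,L}(rz)|_{z=1}=r\,\frac{\mathrm d}{\mathrm dr}M_{I,L}(r)\). The sum forms follow in two equivalent ways: directly, as the mean \(\sum_j p^{(r)}_{j,L}\) and variance \(\sum_j p^{(r)}_{j,L}(1-p^{(r)}_{j,L})\) of independent Bernoulli variables; or by termwise differentiation of \(\log\bigl(1+r\lambda_{j,L}/(1-\lambda_{j,L})\bigr)\), which is legitimate because \(\sum_j\lambda_{j,L}<\infty\) and \(\lambda_{j,L}\le\rho_I<1\) give locally uniform convergence in \(r>0\). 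A short check confirms \(p(1-p)=r\lambda(1-\lambda)/(1-\lambda+r\lambda)^2\).

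For the asymptotics I would apply Proposition~\ref{prop:endpoint-neighborhood} with \(j=1,2\) on a closed disk \(|z|\le r_0\), where \(0<r_0<\rho_I^{-1}-1\) is fixed. On \((0,r_0]\) this gives
\[
M_{I,L}(r)=L\,rA_I'(r)+rB_I'(r)+o(1),\qquad
V_{I,L}(r)=L\bigl(rA_I'(r)+r^2A_I''(r)\bigr)+\bigl(rB_I'(r)+r^2B_I''(r)\bigr)+o(1),
\]
and the \(o(1)\) terms are uniform in \(r\) because the convergence in the proposition is uniform on compacts. Next I insert the Taylor expansions \eqref{eq:endpoint-taylor} and \eqref{eq:endpoint-variance-taylor}. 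These give \(LrA_I'(r)=L\kappa_Ir+O(Lr^2)\), and similarly for the variance combination, uniformly on \([0,r_0]\) since \(A_I\) is holomorphic on a neighbourhood of the closed disk. Because \(B_I\) is holomorphic there, \(B_I'\) and \(B_I''\) are bounded on \([0,r_0]\). Hence \(rB_I'(r)=O(r)\) and \(rB_I'(r)+r^2B_I''(r)=O(r)\).

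The only delicate point is uniformity. Proposition~\ref{prop:endpoint-neighborhood} is stated locally uniformly on the open disk, so I must choose \(r_0\) strictly inside \(\rho_I^{-1}-1\) and use compactness of the closed disk \(|z|\le r_0\). The error \(o(1)\) is then independent of \(r\), and the \(O(\cdot)\) constants depend only on \(r_0\) and \(I\). That yields \eqref{eq:endpoint-tilted-mean} and \eqref{eq:endpoint-tilted-variance}.
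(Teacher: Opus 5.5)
Your proposal is correct and follows essentially the same route as the paper. The exact formulas come from the Bernoulli representation of Lemma~\ref{lem:tilted-determinantal-law} and the log-derivatives of the tilted generating function, and the asymptotics come from Proposition~\ref{prop:endpoint-neighborhood} with \(j=1,2\), the Taylor expansions \eqref{eq:endpoint-taylor}--\eqref{eq:endpoint-variance-taylor}, and boundedness of \(B_I'\), \(B_I''\) near \(0\). Your explicit choice \(0<r_0<\rho_I^{-1}-1\), with compactness of the closed disk to get uniformity, simply spells out what the paper compresses into ``choose \(r_0>0\) sufficiently small.''
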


\begin{proof}
By Lemma~\ref{lem:tilted-determinantal-law}, under \(\mathbb Q_{L,r}\), the count \(N_I(L)\) has the same distribution as a sum of independent Bernoulli variables with parameters \((p_{j,L}^{(r)})_{j\geq1}\).
Summing their means and variances gives the series in \eqref{eq:exact-tilted-mean} and \eqref{eq:exact-tilted-variance}.
The logarithmic derivative identities follow directly from \eqref{eq:exponential-tilt}.

Choose \(r_0>0\) sufficiently small.
By Proposition~\ref{prop:endpoint-neighborhood} with \(j=1,2\), as \(L\to\infty\),
\begin{align*}
M_{I,L}(r)
&=
LrA_I'(r)+rB_I'(r)+o(1),\\
V_{I,L}(r)
&=
L\bigl(rA_I'(r)+r^2A_I''(r)\bigr)
+rB_I'(r)+r^2B_I''(r)+o(1),
\end{align*}
uniformly for \(0<r\leq r_0\).
Since \(B_I\) is holomorphic near \(0\), the terms involving \(B_I\) are \(O(r)\) uniformly on this interval.
The expansions \eqref{eq:endpoint-taylor} and \eqref{eq:endpoint-variance-taylor} therefore give \eqref{eq:endpoint-tilted-mean} and \eqref{eq:endpoint-tilted-variance}.
\end{proof}

Throughout the remainder of this section, let \(b_L,\varepsilon_L>0\) satisfy
\[
b_L\to\infty,
\qquad
\varepsilon_L\to0,
\qquad
b_L\leq\varepsilon_L L
\]
for all sufficiently large \(L\).

The next proposition determines the asymptotic scales of the exact saddle and the corresponding tilted variance.

\begin{proposition}
\label{prop:endpoint-saddle-variance}
For all sufficiently large \(L\) and every integer \(b_L\leq n\leq\varepsilon_L L\), the equation \(M_{I,L}(r)=n\) has a unique solution \(r_{L,n}>0\).
Moreover, as \(L\to\infty\), uniformly over integers \(b_L\leq n\leq\varepsilon_L L\),
\[
r_{L,n}\sim\frac{n}{L\kappa_I},
\qquad
V_{I,L}(r_{L,n})\sim n.
\]
\end{proposition}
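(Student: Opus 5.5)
Existence and uniqueness come from strict monotonicity and the limits of \(M_{I,L}\); the asymptotics come from applying Lemma~\ref{lem:tilted-mean-variance} at a point \(r\) that is already known to be small.

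\emph{Existence and uniqueness.} By \eqref{eq:exact-tilted-mean}, each summand \(r\lambda_{j,L}/(1-\lambda_{j,L}+r\lambda_{j,L})\) is continuous and nondecreasing in \(r\). It is strictly increasing whenever \(\lambda_{j,L}>0\), and at least one such eigenvalue exists since \(\operatorname{tr}T_{I,L}=\mathbb E N_I(L)>0\). Monotone convergence and the summable bound \(p^{(r)}_{j,L}\leq r\lambda_{j,L}/(1-\rho_I)\) give continuity of \(M_{I,L}\) on \((0,\infty)\). They also give \(M_{I,L}(r)\to0\) as \(r\downarrow0\). As \(r\to\infty\), \(M_{I,L}(r)\) tends to the number of nonzero eigenvalues, which may be infinite. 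I therefore need this number to exceed \(n\). It suffices to compare at \(r=r_0\): by \eqref{eq:endpoint-tilted-mean}, after shrinking \(r_0\) so that the \(O(Lr^2)\) term is at most \(\tfrac12 L\kappa_Ir\), we get \(M_{I,L}(r_0)\geq \tfrac12 L\kappa_I r_0 - O(1)\). Since \(\varepsilon_L\to0\), this exceeds \(\varepsilon_L L\geq n\) for large \(L\). The intermediate value theorem then gives a unique \(r_{L,n}\in(0,r_0)\).

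\emph{Asymptotics of the saddle.} The remainders in \eqref{eq:endpoint-tilted-mean} are uniform in \(r\in(0,r_0]\). So I write \(M_{I,L}(r)=L\kappa_Ir(1+\delta_1(r))+\eta_L(r)\), where \(|\delta_1(r)|\leq Cr\) and \(|\eta_L(r)|\leq Cr+o(1)\), with the \(o(1)\) uniform in \(r\). Setting \(r=r_{L,n}\) and \(M=n\) gives
\[
L\kappa_Ir_{L,n}(1+O(r_{L,n}))=n+O(1).
\]
For a priori smallness, the relation \(L\kappa_Ir(1-Cr)\leq n+O(1)\leq\varepsilon_LL+O(1)\) forces \(r_{L,n}=O(\varepsilon_L)+O(1/L)\), hence \(r_{L,n}\to0\) uniformly. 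Dividing by \(n\), and using \(n\geq b_L\to\infty\) so that \(O(1)/n\to0\), I get \(r_{L,n}L\kappa_I/n\to1\) uniformly in \(n\).

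\emph{Variance.} Insert this into \eqref{eq:endpoint-tilted-variance}:
\[
V_{I,L}(r_{L,n})=L\kappa_Ir_{L,n}\bigl(1+O(r_{L,n})\bigr)+O(1)=n(1+o(1))+O(1)\sim n,
\]
again because \(n\to\infty\).

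\emph{Main obstacle.} The delicate point is uniformity. The \(o(1)\) terms in Lemma~\ref{lem:tilted-mean-variance} must be uniform in \(r\in(0,r_0]\); the lemma asserts this, because it is inherited from local uniformity near \(z=0\) in Proposition~\ref{prop:endpoint-neighborhood}. The remaining care is in the a priori bound \(r_{L,n}\leq r_0\). This is handled by the monotonicity argument above, which locates the root inside the range where the expansions apply.
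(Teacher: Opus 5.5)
Your proof is correct and uses the same ingredients as the paper: strict monotonicity of \(M_{I,L}\), the intermediate value theorem, and the uniform small-\(r\) expansions of Lemma~\ref{lem:tilted-mean-variance}, with the \(O(r)+o(1)\) remainders absorbed because \(n\geq b_L\to\infty\). The difference is minor: the paper brackets the root directly between \(r^{\pm}_{L,n}=(1\pm\delta)n/(L\kappa_I)\) and lets \(\delta\downarrow0\), which gives existence and the asymptotics in one step, whereas you first trap the root in \((0,r_0)\) by comparing with \(M_{I,L}(r_0)\) and then invert the expansion at the root itself.
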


\begin{proof}
For each \(L\), set \(\mathcal N_L:=\mathbb Z\cap[b_L,\varepsilon_LL]\).
If \(\mathcal N_L=\varnothing\), there is nothing to prove, so assume otherwise.
Fix \(\delta\in(0,1)\) and, for \(n\in\mathcal N_L\), let \(r_{L,n}^{\pm}:=(1\pm\delta)n/(L\kappa_I)\).
Since
\[
\sup_{n\in\mathcal N_L}r_{L,n}^{+}
\leq\frac{(1+\delta)\varepsilon_L}{\kappa_I}
\longrightarrow0,
\]
\eqref{eq:endpoint-tilted-mean} gives
\[
M_{I,L}(r_{L,n}^{\pm})
=
(1\pm\delta)n
+
O\left(\frac{n^2}{L}\right)
+
O\left(\frac{n}{L}\right)
+
o(1)
\]
uniformly for \(n\in\mathcal N_L\).
After division by \(n\), the three remainder terms are \(O(\varepsilon_L)\), \(O(L^{-1})\), and \(o(1)/b_L\), respectively, and hence are \(o(1)\) uniformly.
Therefore, for all sufficiently large \(L\),
\[
M_{I,L}(r_{L,n}^{-})<n<M_{I,L}(r_{L,n}^{+}),
\qquad n\in\mathcal N_L.
\]
By \eqref{eq:exact-tilted-variance},
\[
M_{I,L}'(r)=\frac{V_{I,L}(r)}{r}>0,
\qquad r>0,
\]
since \(\mathbb E N_I(L)=\sum_j\lambda_{j,L}>0\) and \(0\leq\lambda_{j,L}<1\).
Thus \(M_{I,L}\) is strictly increasing.
By continuity, for every \(n\in\mathcal N_L\) there is a unique \(r_{L,n}\in(r_{L,n}^{-},r_{L,n}^{+})\) such that \(M_{I,L}(r_{L,n})=n\).
Since \(\delta\) is arbitrary, \(r_{L,n}\sim n/(L\kappa_I)\) uniformly for \(n\in\mathcal N_L\).

In particular, \(r_{L,n}=O(n/L)\) uniformly, so \(\sup_{n\in\mathcal N_L}r_{L,n}\to0\).
Moreover,
\[
\frac{Lr_{L,n}^2}{n}
=
O\left(\frac nL\right)
=
O(\varepsilon_L)=o(1),
\qquad
\frac{r_{L,n}}{n}=O(L^{-1})=o(1)
\]
uniformly for \(n\in\mathcal N_L\), while the uniform \(o(1)\) term in \eqref{eq:endpoint-tilted-variance} is \(o(n)\) because \(n\geq b_L\to\infty\).
Hence
\[
V_{I,L}(r_{L,n})
=
L\kappa_Ir_{L,n}+o(n)
=
n(1+o(1))
\]
uniformly for \(n\in\mathcal N_L\).
\end{proof}

At the exact saddle, the tilted count is centered at the target \(n\).
Combining the local central limit theorem for determinantal point processes with the exact change of measure yields the following point-probability asymptotic.

\begin{proposition}
\label{prop:exact-saddle-point-probability}
As \(L\to\infty\), uniformly over integers \(b_L\leq n\leq\varepsilon_L L\),
\[
\mathbb P\{N_I(L)=n\}
=
\frac{
\exp\!\left\{
-LJ_I(0)+L A_I(r_{L,n})
+D_I(r_{L,n}-1)-n\log r_{L,n}
\right\}
}{
\sqrt{2\pi V_{I,L}(r_{L,n})}
}
(1+o(1)).
\]
\end{proposition}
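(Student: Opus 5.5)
The plan is to combine the exact change-of-measure identity with a local central limit theorem under the tilted law. By \eqref{eq:exponential-tilt}, for every \(r>0\),
\[
\mathbb P\{N_I(L)=n\}
=
\mathbb P\{N_I(L)=0\}\,R_{I,L}(r)\,r^{-n}\,\mathbb Q_{L,r}\{N_I(L)=n\}.
\]
We take \(r=r_{L,n}\) from Proposition~\ref{prop:endpoint-saddle-variance}. Then three factors must be identified, uniformly over \(b_L\leq n\leq\varepsilon_LL\).

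\emph{The hole probability.} The case \(k=0\) of Theorem~\ref{thm:fixed-counts} gives
\[
\mathbb P\{N_I(L)=0\}=\exp\{-LJ_I(0)+D_I(-1)\}(1+o(1)).
\]

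\emph{The normalized generating function.} Since \(r_{L,n}\to0\) uniformly, all saddles lie in a fixed compact subset \([0,r_0]\) of the disk \(|z|<\rho_I^{-1}-1\). The case \(j=0\) of Proposition~\ref{prop:endpoint-neighborhood} then gives
\[
\log R_{I,L}(r_{L,n})=LA_I(r_{L,n})+B_I(r_{L,n})+o(1)
\]
uniformly in \(n\). The point is that the \(o(1)\) is uniform on this compact set, so no information about how \(r_{L,n}\) depends on \(n\) is needed. Using \(B_I(r)=D_I(r-1)-D_I(-1)\), the term \(D_I(-1)\) cancels against the one in the hole probability. Together with \(r_{L,n}^{-n}=e^{-n\log r_{L,n}}\), this reproduces exactly the numerator in the statement.

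\emph{The local limit factor.} It remains to show that
\[
\mathbb Q_{L,r_{L,n}}\{N_I(L)=n\}=\bigl(2\pi V_{I,L}(r_{L,n})\bigr)^{-1/2}(1+o(1))
\]
uniformly. By Lemma~\ref{lem:tilted-determinantal-law}, under \(\mathbb Q_{L,r}\) the count is a sum of independent Bernoulli variables. Its mean is exactly \(n\) by the choice of \(r_{L,n}\), and its variance satisfies \(V_{I,L}(r_{L,n})\sim n\to\infty\) by Proposition~\ref{prop:endpoint-saddle-variance}. The local central limit theorem of \cite[Theorem~1]{ForresterLebowitz2014} applies to such sums, and to counts of determinantal processes with trace-class contraction kernels, whenever the variance diverges. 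Evaluated at the mean, where the Gaussian exponent vanishes, it gives the stated asymptotic.

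\emph{Main obstacle: uniformity in \(n\).} The cited local CLT is stated along a single sequence of determinantal processes with diverging variance, whereas we need uniformity over a growing range of \(n\). I would handle this by contradiction. If uniformity failed, there would be \(\epsilon>0\), a sequence \(L_m\to\infty\) and integers \(n_m\in[b_{L_m},\varepsilon_{L_m}L_{m}]\) with relative error at least \(\epsilon\). The kernels \(T^{(r_{L_m,n_m})}_{I,L_m}\) form a single sequence of positive trace-class contractions with variances \(\sim n_m\to\infty\), so the local CLT applies along it, a contradiction. Should the cited statement be unavailable in this form, I would instead prove it directly for independent Bernoulli sums via characteristic functions: use \(|\mathbb E e^{\mathrm i u X}|\leq\exp\{-c\,u^2\operatorname{Var}X\}\) for \(|u|\leq\pi\), together with a third-cumulant bound (the third cumulant is at most the variance) on \(|u|\leq V^{-1/3}\). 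These bounds depend only on the variance, which gives uniformity automatically.

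Multiplying the three factors yields the proposition.
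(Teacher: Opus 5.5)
Your proposal is correct and follows essentially the same route as the paper: the change-of-measure identity at the exact saddle $r_{L,n}$, the Forrester--Lebowitz local CLT evaluated at the tilted mean $n$ (with uniformity obtained by a sequential argument, which your contradiction argument reproduces), and the endpoint expansion on a fixed compact set near $t=-1$ to identify $\log\mathbb E[r_{L,n}^{N_I(L)}]$. The only differences are cosmetic. You factor $\mathbb E[r^{N_I(L)}]$ as the hole probability times $R_{I,L}(r)$ and invoke Theorem~\ref{thm:fixed-counts} with $k=0$ together with Proposition~\ref{prop:endpoint-neighborhood} with $j=0$, whereas the paper applies Theorem~\ref{thm:endpoint-expansion} directly at $t=r_{L,n}-1$. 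Your fallback characteristic-function proof of the uniform local CLT for Bernoulli sums is a valid self-contained substitute for the citation.
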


\begin{proof}
For each \(L\), set \(\mathcal N_L:=\mathbb Z\cap[b_L,\varepsilon_LL]\).
The assertion is vacuous when \(\mathcal N_L=\varnothing\).
Otherwise, by \eqref{eq:exponential-tilt}, with \(r=r_{L,n}\),
\begin{equation}
\label{eq:exact-saddle-change-of-measure}
\mathbb P\{N_I(L)=n\}
=
\mathbb E\!\left[r_{L,n}^{N_I(L)}\right]\,
r_{L,n}^{-n}\,
\mathbb Q_{L,r_{L,n}}\{N_I(L)=n\},
\qquad n\in\mathcal N_L.
\end{equation}

By Lemma~\ref{lem:tilted-determinantal-law}, under \(\mathbb Q_{L,r_{L,n}}\), \(N_I(L)\) has the same distribution as the total number of points of a determinantal point process whose correlation operator is \(T_{I,L}^{(r_{L,n})}\).
By Lemma~\ref{lem:tilted-mean-variance}, its mean and variance are \(M_{I,L}(r_{L,n})=n\) and \(V_{I,L}(r_{L,n})\), respectively.
Proposition~\ref{prop:endpoint-saddle-variance} gives \(V_{I,L}(r_{L,n})\sim n\) uniformly for \(n\in\mathcal N_L\), and hence
\[
\inf_{n\in\mathcal N_L}V_{I,L}(r_{L,n})\longrightarrow\infty.
\]

To obtain the local estimate uniformly, let \(L_k\to\infty\) and choose arbitrarily \(n_k\in\mathcal N_{L_k}\).
The corresponding standard deviations tend to infinity, while the tilted means equal the integers \(n_k\).
Thus \cite[Theorem~1, Eq.~(2.9)]{ForresterLebowitz2014}, applied with \(x=0\) to the sequence of determinantal point processes with correlation operators \(T_{I,L_k}^{(r_{L_k,n_k})}\), gives the local estimate at their respective means \(n_k\).
Since the sequence was arbitrary, the sequential criterion yields
\begin{equation}
\label{eq:tilted-local-limit}
\mathbb Q_{L,r_{L,n}}\{N_I(L)=n\}
=
\frac{1}{\sqrt{2\pi V_{I,L}(r_{L,n})}}
(1+o(1))
\end{equation}
uniformly for \(n\in\mathcal N_L\).

Moreover, Proposition~\ref{prop:endpoint-saddle-variance} gives \(r_{L,n}=O(\varepsilon_L)\to0\) uniformly.
Since \(-1\in\Omega_I\), the points \(r_{L,n}-1\) lie in a fixed compact subset of \(\Omega_I\) for all sufficiently large \(L\).
Hence Theorem~\ref{thm:endpoint-expansion}, together with \eqref{eq:endpoint-rate} and \eqref{eq:endpoint-A}, yields
\[
\log\mathbb E\!\left[r_{L,n}^{N_I(L)}\right]
=
-LJ_I(0)+L A_I(r_{L,n})
+D_I(r_{L,n}-1)+o(1)
\]
uniformly for \(n\in\mathcal N_L\).
Exponentiating this expansion and substituting it together with \eqref{eq:tilted-local-limit} into \eqref{eq:exact-saddle-change-of-measure} completes the proof.
\end{proof}

\subsection{Limiting saddles and local asymptotics}

We now compare the exact finite-\(L\) saddle \(r_{L,n}\) from Proposition~\ref{prop:endpoint-saddle-variance} with the limiting saddle \(s_{L,n}\) characterized by \eqref{eq:limiting-saddle-equation}.
The following lemma justifies replacing \(r_{L,n}\) by \(s_{L,n}\) in the probability asymptotic of Proposition~\ref{prop:exact-saddle-point-probability}.
For \(s\geq0\), set
\[
v_I(s):=sA_I'(s)+s^2A_I''(s).
\]

\begin{lemma}
\label{lem:saddle-point-replacement}
For \(u>0\), set \(\Psi_{L,n}(u):=L A_I(u)-n\log u+D_I(u-1)\).
Then, as \(L\to\infty\), uniformly over integers \(b_L\leq n\leq\varepsilon_L L\),
\begin{equation}
\label{eq:saddle-point-replacements}
\Psi_{L,n}(r_{L,n})
=
\Psi_{L,n}(s_{L,n})+o(1),
\qquad
V_{I,L}(r_{L,n})
=
L v_I(s_{L,n})(1+o(1)).
\end{equation}
\end{lemma}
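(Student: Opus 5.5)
Both \(r_{L,n}\) and \(s_{L,n}\) are asymptotic to \(n/(L\kappa_I)\); we need their difference to be small enough that \(\Psi_{L,n}\) changes only by \(o(1)\). The plan is to quantify that difference and then use the fact that \(s_{L,n}\) is a critical point of the main part of \(\Psi_{L,n}\).

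\textbf{Step 1: comparing the two saddles.} By Proposition~\ref{prop:endpoint-neighborhood} with \(j=1\), uniformly for \(0<r\le r_0\),
\[
M_{I,L}(r)=LrA_I'(r)+rB_I'(r)+o(1).
\]
At \(r=r_{L,n}\) this gives \(Lr_{L,n}A_I'(r_{L,n})=n+O(r_{L,n})+o(1)=n+o(1)\). By definition, \(Ls_{L,n}A_I'(s_{L,n})=n\).

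\textbf{Step 2: properties of \(g(u):=uA_I'(u)\).} The function \(g\) is holomorphic near \(0\), with \(g(u)=\kappa_I u+O(u^2)\) and \(g'(u)=v_I(u)/u=\kappa_I+O(u)\). For real \(u\), \(g\) is increasing and \(g(u)=x\) has a unique small solution when \(x\) is small. So \(s_{L,n}\) exists, tends to zero uniformly, and satisfies \(s_{L,n}\sim n/(L\kappa_I)\). Since \(g'\ge \kappa_I/2\) on a small interval, the mean value theorem gives
\[
|r_{L,n}-s_{L,n}|\le \frac{2}{L\kappa_I}\,\bigl|Lg(r_{L,n})-Lg(s_{L,n})\bigr|=o(1/L).
\]
Relative to \(s\sim n/L\), this means \(r_{L,n}/s_{L,n}=1+o(1/n)\).

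\textbf{Step 3: Taylor expansion of \(\Psi_{L,n}\) around \(s=s_{L,n}\).} Set \(f(u)=LA_I(u)-n\log u\), so that \(f'(s)=0\) by the saddle equation. For \(u\) between \(r\) and \(s\),
\[
f''(u)=LA_I''(u)+\frac{n}{u^2}=O\!\left(\frac{L^2}{n}\right),
\]
using \(n/u^2\asymp L^2/n\) and \(LA_I''=O(L)\), which is \(\le L^2/n\) because \(n\le \varepsilon_L L\). Hence
\[
f(r)-f(s)=O\!\left(\frac{L^2}{n}\right)|r-s|^2 .
\]
The \(o(1)\) error in Step 1 is not strong enough here: \(|r-s|=o(1/L)\) only gives \(|f(r)-f(s)|=o(1/n)\cdot O(1)\), which is \(o(1)\). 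For the remaining term, \(D_I(r-1)-D_I(s-1)=O(|r-s|)=o(1)\) because \(D_I\) is holomorphic near \(-1\). Together these give the first relation in \eqref{eq:saddle-point-replacements}. If the crude bound feels delicate, one can instead write \(f(r)-f(s)=\int_s^r (f'(u)-f'(s))\,du\) with \(|f'(u)|\le O(L^2/n)|u-s|\).

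\textbf{Step 4: the variance.} Proposition~\ref{prop:endpoint-neighborhood} with \(j=2\) gives \(V_{I,L}(r_{L,n})=Lv_I(r_{L,n})+O(r_{L,n})+o(1)\). Since \(v_I(u)=\kappa_I u(1+O(u))\), we get \(v_I(r)/v_I(s)\to1\) from \(r/s\to1\). Also \(Lv_I(s)\sim n\to\infty\) absorbs the additive \(o(1)+O(r)\) errors. This gives the second relation.

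\textbf{Main obstacle.} I expect the only delicate point to be Step 2: turning the additive \(o(1)\) error in the mean equation into \(|r-s|=o(1/L)\), uniformly in \(n\). This needs a uniform lower bound on \(g'\) near \(0\), which holomorphy of \(A_I\) with \(A_I'(0)=\kappa_I>0\) supplies.
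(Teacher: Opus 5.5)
Your proposal is correct and follows essentially the same route as the paper. You compare the two mean equations via Proposition~\ref{prop:endpoint-neighborhood} with $j=1$, invert using a uniform lower bound on $(uA_I'(u))'$ near $0$, Taylor-expand $LA_I(u)-n\log u$ about its critical point $s_{L,n}$ with second derivative $O(L^2/n)$, use Lipschitz continuity of $D_I$ near $-1$, and treat the variance via $j=2$. The only difference is that you absorb $r_{L,n}B_I'(r_{L,n})=O(r_{L,n})$ into $o(1)$ to get $|r_{L,n}-s_{L,n}|=o(1/L)$, while the paper keeps $O(n/L^2)+o(L^{-1})$; these are equivalent since $n/L\le\varepsilon_L$.

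One cosmetic fix: in Step~3, the sentence saying the $o(1)$ error ``is not strong enough'' contradicts your own correct computation $O(L^2/n)\cdot o(L^{-2})=o(1/n)=o(1)$, and should be deleted.
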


\begin{proof}
For each \(L\), set \(\mathcal N_L:=\mathbb Z\cap[b_L,\varepsilon_L L]\).
If \(\mathcal N_L=\varnothing\), there is nothing to prove, so assume otherwise.
We first compare the two saddle points and then use this comparison to prove the two replacements in \eqref{eq:saddle-point-replacements}.
Define \(f_I(s):=sA_I'(s)\).
For \(s>0\), differentiating the integral representation of \(A_I\) gives
\[
f_I'(s)
=
\frac{1}{2\pi}
\int_0^\infty
\frac{q_I(\xi)(1-q_I(\xi))}
{(1-q_I(\xi)+s q_I(\xi))^2}\,\mathrm d\xi
>0.
\]
Thus \(f_I\) is strictly increasing and \(v_I(s)=sf_I'(s)\).
Since \(f_I(s_{L,n})=n/L\leq\varepsilon_L\) and \(f_I(0)=0\), \(s_{L,n}\to0\) uniformly for \(n\in\mathcal N_L\).
By \eqref{eq:endpoint-taylor} and \eqref{eq:endpoint-variance-taylor},
\[
f_I(s)=\kappa_I s+O(s^2), \qquad v_I(s)=\kappa_I s+O(s^2),
\]
as \(s\downarrow0\).
Consequently,
\begin{equation}
\label{eq:limiting-saddle-asymptotics}
s_{L,n}
\sim
\frac{n}{L\kappa_I},
\qquad
L v_I(s_{L,n})
\sim n
\end{equation}
uniformly for \(n\in\mathcal N_L\).

Applying Proposition~\ref{prop:endpoint-neighborhood} with \(j=1\), using \(B_I'(u)=D_I'(u-1)\), and comparing \(M_{I,L}(r_{L,n})=n\) with \(L f_I(s_{L,n})=n\), we obtain
\[
L\bigl(f_I(r_{L,n})-f_I(s_{L,n})\bigr)
+r_{L,n}D_I'(r_{L,n}-1)
+o(1)
=
0
\]
uniformly for \(n\in\mathcal N_L\).
By Proposition~\ref{prop:endpoint-saddle-variance} and \eqref{eq:limiting-saddle-asymptotics}, both saddle points converge uniformly to zero.
Since \(f_I'(0)=\kappa_I>0\), there exists \(c>0\) such that \(f_I'(u)\geq c\) for all sufficiently small \(u\).
Moreover, \(r_{L,n}D_I'(r_{L,n}-1)=O(n/L)\) uniformly.
The mean value theorem therefore gives
\begin{equation}
\label{eq:saddle-difference}
|r_{L,n}-s_{L,n}|
=
O\left(\frac{n}{L^2}\right)+o(L^{-1})
\end{equation}
uniformly for \(n\in\mathcal N_L\).
Dividing \eqref{eq:saddle-difference} by the first relation in \eqref{eq:limiting-saddle-asymptotics} and using \(n\geq b_L\to\infty\), we obtain \(r_{L,n}/s_{L,n}\to1\) uniformly.

Set \(\Phi_{L,n}(u):=L A_I(u)-n\log u\), so that \(\Psi_{L,n}(u)=\Phi_{L,n}(u)+D_I(u-1)\).
By \eqref{eq:limiting-saddle-equation}, \(\Phi_{L,n}'(s_{L,n})=0\).
If \(u\) lies between \(r_{L,n}\) and \(s_{L,n}\), then \(u\asymp s_{L,n}\asymp n/L\) uniformly.
Since \(A_I''\) is bounded near the origin,
\[
\Phi_{L,n}''(u)
=
L A_I''(u)+\frac{n}{u^2}
=
O\left(\frac{L^2}{n}\right).
\]
Taylor's theorem and \eqref{eq:saddle-difference} therefore yield
\[
\Phi_{L,n}(r_{L,n})-\Phi_{L,n}(s_{L,n})
=
O\left(
\frac{L^2}{n}|r_{L,n}-s_{L,n}|^2
\right)
=
o(1)
\]
uniformly for \(n\in\mathcal N_L\), since \(n/L\leq\varepsilon_L\to0\) and \(n\geq b_L\to\infty\).
Moreover, the boundedness of \(D_I'\) near \(-1\) and \eqref{eq:saddle-difference} imply \(D_I(r_{L,n}-1)-D_I(s_{L,n}-1)=o(1)\) uniformly.
Adding these two estimates proves the first relation in \eqref{eq:saddle-point-replacements}.

Applying Proposition~\ref{prop:endpoint-neighborhood} with \(j=2\) and using \(B_I^{(j)}(u)=D_I^{(j)}(u-1)\), \(j=1,2\), gives
\[
V_{I,L}(r_{L,n})
=
L v_I(r_{L,n})
+r_{L,n}D_I'(r_{L,n}-1)
+r_{L,n}^2D_I''(r_{L,n}-1)
+o(1).
\]
By Proposition~\ref{prop:endpoint-saddle-variance}, \(r_{L,n}=O(n/L)\) uniformly.
Since \(D_I'\) and \(D_I''\) are bounded near \(-1\),
\[
r_{L,n}D_I'(r_{L,n}-1)
+
r_{L,n}^2D_I''(r_{L,n}-1)
+
o(1)
=
o(n)
\]
uniformly for \(n\in\mathcal N_L\).
Hence
\[
V_{I,L}(r_{L,n})
=
L v_I(r_{L,n})+o(n)
\]
uniformly for \(n\in\mathcal N_L\).
Since \(v_I'\) is bounded near the origin, \eqref{eq:saddle-difference} also gives
\[
L|v_I(r_{L,n})-v_I(s_{L,n})|
=
O\left(\frac{n}{L}\right)+o(1)
=
o(n).
\]
It follows that \(V_{I,L}(r_{L,n})=L v_I(s_{L,n})+o(n)\).
The second relation in \eqref{eq:limiting-saddle-asymptotics} proves the second relation in \eqref{eq:saddle-point-replacements}.
\end{proof}

\begin{proof}[Proof of Theorem~\ref{thm:vanishing-density}]
For each \(L\), set \(\mathcal N_L:=\mathbb Z\cap[b_L,\varepsilon_L L]\).
If \(\mathcal N_L=\varnothing\), there is nothing to prove, so assume otherwise.
With \(\Psi_{L,n}\) as in Lemma~\ref{lem:saddle-point-replacement}, Proposition~\ref{prop:exact-saddle-point-probability} and \eqref{eq:saddle-point-replacements} give
\[
\mathbb P\{N_I(L)=n\}
=
\frac{
\exp\{-LJ_I(0)+\Psi_{L,n}(s_{L,n})\}
}{
\sqrt{2\pi L v_I(s_{L,n})}
}
(1+o(1))
\]
uniformly for \(n\in\mathcal N_L\).
By \eqref{eq:rate-variational} and \eqref{eq:limiting-saddle-equation},
\[
J_I(n/L)
=
J_I(0)
+\frac{n}{L}\log s_{L,n}
-A_I(s_{L,n}).
\]
By the definition of \(\Psi_{L,n}\), this identity gives
\[
-LJ_I(0)+\Psi_{L,n}(s_{L,n})
=
-LJ_I(n/L)+D_I(s_{L,n}-1).
\]
Substitution proves the probability asymptotic in Theorem~\ref{thm:vanishing-density}.
The remaining two assertions follow from \eqref{eq:limiting-saddle-asymptotics} and the definition of \(v_I\).
\end{proof}


\begin{thebibliography}{99}

\bibitem{AiFangHou2026}
Q. Ai, X. Fang, and S. Hou.
\newblock Hardy--Szeg\H{o} point processes: large deviations and strong
Szeg\H{o} asymptotics.
\newblock \href{https://arxiv.org/abs/2608.17509}{arXiv:2608.17509}, 2026.


\bibitem{BuckleyNishryPeledSodin2018}
J. Buckley, A. Nishry, R. Peled, and M. Sodin.
\newblock Hole probability for zeroes of Gaussian Taylor series with finite
radii of convergence.
\newblock \emph{Probab. Theory Related Fields} \textbf{171} (2018),
no.~1--2, 377--430.

\bibitem{CostinLebowitz1995}
O. Costin and J.~L. Lebowitz.
\newblock Gaussian fluctuation in random matrices.
\newblock \emph{Phys. Rev. Lett.} \textbf{75} (1995), no.~1, 69--72.

\bibitem{FenzlLambert2022}
M. Fenzl and G. Lambert.
\newblock Precise deviations for disk counting statistics of invariant
determinantal processes.
\newblock \emph{Int. Math. Res. Not. IMRN} \textbf{2022} (2022),
no.~10, 7420--7494.

\bibitem{ForresterLebowitz2014}
P.~J. Forrester and J.~L. Lebowitz.
\newblock Local central limit theorem for determinantal point processes.
\newblock \emph{J. Stat. Phys.} \textbf{157} (2014), no.~1, 60--69.

\bibitem{HoughEtAl2006}
J.~B. Hough, M. Krishnapur, Y. Peres, and B. Vir\'ag.
\newblock Determinantal processes and independence.
\newblock \emph{Probab. Surv.} \textbf{3} (2006), 206--229.

\bibitem{HoughKrishnapurPeresVirag2009}
J.~B. Hough, M. Krishnapur, Y. Peres, and B. Vir\'ag.
\newblock \emph{Zeros of Gaussian Analytic Functions and Determinantal
Point Processes}.
\newblock University Lecture Series, vol.~51, American Mathematical
Society, Providence, RI, 2009.

\bibitem{Nishry2010}
A. Nishry.
\newblock Asymptotics of the hole probability for zeros of random entire
functions.
\newblock \emph{Int. Math. Res. Not. IMRN} \textbf{2010} (2010),
no.~15, 2925--2946.

\bibitem{PeresVirag2005}
Y. Peres and B. Vir\'ag.
\newblock Zeros of the i.i.d. Gaussian power series: a conformally invariant
determinantal process.
\newblock \emph{Acta Math.} \textbf{194} (2005), no.~1, 1--35.

\bibitem{Simon2005}
B. Simon.
\newblock \emph{Trace Ideals and Their Applications}, 2nd ed.
\newblock Mathematical Surveys and Monographs, vol.~120,
American Mathematical Society, Providence, RI, 2005.

\bibitem{SodinTsirelson2005}
M. Sodin and B. Tsirelson.
\newblock Random complex zeroes, {III}. Decay of the hole probability.
\newblock \emph{Israel J. Math.} \textbf{147} (2005), 371--379.

\bibitem{Soshnikov2002}
A. Soshnikov.
\newblock Gaussian limit for determinantal random point fields.
\newblock \emph{Ann. Probab.} \textbf{30} (2002), no.~1, 171--187.

\end{thebibliography}
\end{document}